\newtheorem{theorem}{Theorem}
\newtheorem{lemma}[theorem]{Lemma}
\newtheorem{corollary}[theorem]{Corollary}
\newtheorem{remark}[theorem]{Remark}
\newtheorem{openquestion}[theorem]{Open Question}
\newtheorem{example}[theorem]{Example}
\journal{Discrete Applied Mathematics}
\begin{document}

\begin{frontmatter}



\title{The local metric dimension\\ of subgraph-amalgamation of graphs}


\author[1]{Gabriel A. Barrag\'an-Ram\'irez}
\author[2]{Rinovia Simanjuntak}
\author[2]{Suhadi W. Saputro}
\author[2]{Saladin Uttunggadewa}

\address[1]{Departament d Enginyeria Inform\`{a}tica i Matem\`{a}tiques\\
Universitat Rovira i Virgili, Av. Pa\"{i}sos Catalans 26, 43007 Tarragona, Spain\\
gabrielantonio.barragan@estudiants.urv.cat}
\address[2]{Combinatorial Mathematics Research Group\\
Institut Teknologi Bandung, Jl. Ganesha 10 Bandung 40132, Indonesia\\
rino@math.itb.ac.id, suhadi@math.itb.ac.id, s\_uttunggadewa@math.itb.ac.id}

\begin{abstract}
A vertex $v$ is said to distinguish two other vertices  $x$ and $y$ of a
nontrivial connected graph G if the distance from $v$ to $x$ is different from the distance
from $v$ to $y$. A set $S\subseteq V(G)$ is a local metric set for $G$ if every two adjacent
vertices of $G$ are distinguished by some vertex of $S$. A local metric set with
minimum cardinality is called a local metric basis for $G$ and its cardinality, the local
metric dimension of $G$, denoted by $\dim_l(G)$. In this paper we present tight bounds
for the local metric dimension of subgraph-amalgamation of graphs with special emphasis
in the case of subgraphs which are isometric embeddings.
\end{abstract}

\begin{keyword}
metric dimension \sep local metric set \sep local metric basis \sep local metric dimension \sep subgraph-amalgamation \sep isometric embedding

\MSC[2008] 05C12 \sep 05C76
\end{keyword}

\end{frontmatter}



\section{Introduction}

All graphs considered in this paper are finite, simple, and non-null graphs. If $G$ is a graph we denote by $V(G)$ and $E(G)$ its
vertex and edge sets respectively. If $u$ and $v$ are the end vertices of an edge we write $uv$ for the edge itself.
If $uv\in E(G)$ we say that $u$ is \emph{adjacent} to $v$ (in $G$) and write $u\sim_G v$, omitting the
sub index when there is no ambiguity.


Given two graphs $G$ and $H$ such that $V(G)\cap V(H)=\emptyset$ we define the \emph{join} of $G$ and $H$ as the
graph $G+H$ with $V(G+H)=V(G)\cup V(H)$ and $E(G+H)=E(G)\cup E(H)\cup\{uv: u\in V(G), v\in V(H)\}$. Now we can define the
\emph{generalized fan} on $m+n$ vertices as $F_{m,n}:= \overline{K_m}+P_n$ where
$K_m$ is the complete graph on $m$ vertices and $P_n$ is the path on $n$ vertices.
The \emph{generalized wheel} on $m+n$ vertices is $W_{m,n}:= \overline{K_m}+C_n$ where $C_n$ is the \emph{cycle} on $n$
vertices.

With respect to local metric dimension of a graph we define the following terms. For $u,v\in V(G)$ we define the \emph{distance between }$u$ and $v$, denoted by $d(u,v)$, as the length of the shortest $u-v$ path. $A\subseteq E(G)$ is \emph{distinguished} by $B\subseteq V(G)$, if for each $uv\in A$ there exists $w\in B$ such that $d(w,u)\ne d(w,v)$. If $B\subseteq V(G)$ distinguishes $E(G)$ we say that $B$ is a \emph{local metric set}  for $G$. A local metric set with minimum cardinality is called a \emph{local metric basis} and its cardinality is called the \emph{local metric dimension}, denoted by $\dim_l(G)$.

The study of local metric dimension of graphs was introduced in \cite{Okamoto}. Among its numerous results we cite.

\begin{theorem}\label{okamoto bip}\cite{Okamoto}
$\dim_l(G)=1$ if and only if $G$ is a connected bipartite graph
\end{theorem}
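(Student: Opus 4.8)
The plan is to prove both implications through the \emph{parity} of graph distances. Recall that $\dim_l(G)=1$ means precisely that some single vertex $w$ forms a local metric set, i.e. $d(w,x)\neq d(w,y)$ for every edge $xy\in E(G)$. Note first that $\dim_l(G)\geq 1$ holds automatically, since a nontrivial connected graph has at least one edge and that edge must be distinguished by something; so the content is entirely in producing (or ruling out) a \emph{single} distinguishing vertex.

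For the implication that a connected bipartite graph $G$ satisfies $\dim_l(G)=1$, I would fix a bipartition $(A,B)$ and any vertex $w$, say $w\in A$, and invoke the standard fact that in a bipartite graph the parity of $d(w,v)$ is determined by the part containing $v$: one has $d(w,v)$ even exactly when $v\in A$. Since every edge of $G$ joins $A$ to $B$, its two endpoints then receive distances of opposite parity and are in particular distinct, so $w$ distinguishes every edge and $\{w\}$ is a local metric set, giving $\dim_l(G)=1$.

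For the converse, suppose $\{w\}$ is a local metric set and color each vertex $v$ by the parity of $d(w,v)$. For any edge $xy$ the triangle inequality gives $|d(w,x)-d(w,y)|\leq d(x,y)=1$, while the distinguishing property forces $d(w,x)\neq d(w,y)$; hence the two distances differ by exactly $1$ and have opposite parity. Thus the parity coloring is a proper $2$-coloring of $G$, so $G$ is bipartite, and it is connected by the standing hypothesis.

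I do not expect a genuine obstacle here: the crux is simply the observation that ``distinguishes'' combined with the triangle inequality pins adjacent vertices into opposite distance-parities, which is exactly what a bipartition records. The single point to handle with care is the standard lemma used in the first direction, that distance-parity in a bipartite graph is determined by the part; I would either cite it or give the one-line justification that any two vertices in the same part are joined only by even-length walks.
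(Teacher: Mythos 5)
The paper states this theorem as a cited result from \cite{Okamoto} and gives no proof of its own, so there is nothing internal to compare against; your argument is correct and is in fact the standard proof of this equivalence. Both directions are sound: distance-parity from any fixed vertex is constant on parts of a connected bipartite graph (so any single vertex distinguishes every edge), and conversely the triangle inequality forces the distances from a distinguishing vertex to adjacent vertices to differ by exactly $1$, making the parity classes a proper bipartition.
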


\begin{theorem}\label{ocamoto complete}\cite{Okamoto}
$\dim_l(G)\le |V(G)|-1$ with equality if and only if $G$ is a complete graph
\end{theorem}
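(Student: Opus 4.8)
The plan is to separate the inequality from the equality characterization, and to exploit one elementary observation throughout: for any edge $uv$, the endpoint $u$ itself distinguishes $uv$, since $d(u,u)=0$ while $d(u,v)=1$. Consequently, any vertex set that meets every edge---that is, any vertex cover---is automatically a local metric set.

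For the bound $\dim_l(G)\le |V(G)|-1$, I would fix an arbitrary vertex $w$ and take $S=V(G)\setminus\{w\}$. Every edge of $G$ has two endpoints, so at least one of them survives in $S$; by the observation above that surviving endpoint distinguishes the edge. Hence $S$ is a local metric set of cardinality $|V(G)|-1$, which gives the inequality.

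For the equality, I would first handle the direction $G=K_n\Rightarrow\dim_l(K_n)=n-1$. In $K_n$ every pair of distinct vertices is at distance $1$, so a vertex $w\notin\{u,v\}$ has $d(w,u)=d(w,v)=1$ and fails to distinguish the edge $uv$; only $u$ or $v$ can distinguish it. Thus a local metric set must contain an endpoint of every edge, i.e.\ must be a vertex cover, and since the minimum vertex cover of $K_n$ has size $n-1$ we obtain $\dim_l(K_n)\ge n-1$, which combined with the upper bound yields equality.

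The converse---and the step I expect to be the real crux---is to show that $G$ non-complete forces $\dim_l(G)\le |V(G)|-2$. Here I would pick two nonadjacent vertices $x,y$ (which exist precisely because $G\neq K_n$) and test $S=V(G)\setminus\{x,y\}$. The point is that no genuine edge can have both its endpoints in $\{x,y\}$, since $xy\notin E(G)$; therefore every edge retains at least one endpoint in $S$, and that endpoint distinguishes it. So $S$ is a local metric set of size $|V(G)|-2$, and the contrapositive gives that $\dim_l(G)=|V(G)|-1$ implies $G=K_n$. The only subtlety to watch is that this argument distinguishes edges but says nothing about nonadjacent pairs, which is exactly why the nonadjacency of $x,y$ is essential and why the whole scheme is peculiar to the local (rather than the ordinary) metric dimension.
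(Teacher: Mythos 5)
Your proof is correct. Note that the paper itself states this result without proof, citing \cite{Okamoto}, so there is no in-paper argument to compare against; your vertex-cover observation (an endpoint $u$ of an edge $uv$ always distinguishes it, since $d(u,u)=0\ne 1=d(u,v)$) is exactly the standard route, and all three steps check out: $V(G)\setminus\{w\}$ is a vertex cover, hence a local metric set of size $|V(G)|-1$; in $K_n$ every vertex off an edge is equidistant from its endpoints, so a local metric set must be a vertex cover and thus has size at least $n-1$; and for non-complete $G$, deleting two nonadjacent vertices $x,y$ still leaves a vertex cover, giving $\dim_l(G)\le |V(G)|-2$ (connectedness guarantees $|V(G)|\ge 3$ here, so this set is nonempty). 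The proposal is complete and essentially the argument of the cited reference.
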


Since determining local metric dimension of a general graph $G$
is an NP-complete problem \cite{Fernau}, several researchers study the relation between local metric dimension of graphs
resulting from some product graphs with the local metric dimension of the operand graphs (\cite{elementar},\cite{corona},
 \cite{corona2}).

In this paper we are interested in subgraph-amalgamation that we proceed to define. A graph $J$ is an \emph{induced subgraph} of a graph $G$ if there exists an injective function $\iota: V(J)\rightarrow V(G)$, such that $\iota(u)\iota(v)\in E(G)$ if and only if $uv\in E(J)$. If $J$ is a common induced subgraph for the family of graphs $\{G_i\}$, we denote by $J_i$ the subgraph of $G_i$ induced in $G$, or in other words the image of the embedding function $\iota_i :V(J)\rightarrow V(G_i)$. If $V(J)=\{u_j\}$, we write $V(J_i)=\{u^i_j\}$, where $u^i_j=\iota_i(u_j)$.

Let $\{G_i\}$ be a family of graphs with common induced subgraph $J$ and embeddings $J_i$, where $V(G_i)=\{u^i_j\}$, $V(J)=\{v_k\}$,  and $V(J_i)=\{v^i_k\}$. We define the \emph{subgraph-amalgamation of $\{G_i\}$ over $J$} or \emph{$J$-amalgamation of $\{G_i\}$}, denoted by $\amalg\{(G_i|J_i)\}$, as the graph with vertex-set $$V(\amalg\{(G_i|J_i)\})= \bigcup V(G_i-J_i) \bigcup V(J)$$ and edge-set $$E(\amalg\{(G_i|J_i)\}) = \bigcup E(G_i-J_i) \bigcup E(J) \bigcup \{u^i_jv_k| v_k\in V(J), u^i_jv^i_k \in E(G_i)\}.$$ This means we identify the vertices in the corresponding copies of $J_i$ and preserve the adjacencies (here the importance of $J$ being an \emph{induced} subgraph).

In the case that the common induced subgraph is a vertex or an edge we speak about \emph{vertex-amalgamation} or \emph{edge-amalgamation} respectively. Previous work related with vertex-amalgamation could be found in \cite{elementar}, from where we have the following results.

\begin{theorem}\label{J=1}\cite{elementar}
Let $\{G_i\}$ be a family of connected graphs and $v_i\in V(G_i)$. If $H:=\amalg\{(G_i|v_i)\}$ then\\
\[\dim_l(H) = \left\{
  \begin{array}{ll}
    1, & \hbox{if all $G_i$s are bipartite;} \\
    \dim_l(G_1), & \hbox{if exactly one of the $G_i$s, say $G_1$, is non-bipartite;} \\
    \sum (\dim_l(G_i)-\epsilon_i), & \hbox{if at least two of the $G_i$s are non-bipartite,}\\
&\hbox{where $\epsilon_i= 1$ if $v_i$ is in a local metric basis of $G_i$}\\
&\hbox{and $0$ otherwise.}
  \end{array}
\right.\]
\end{theorem}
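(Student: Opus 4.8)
The plan is to exploit the fact that the amalgamated vertex $v$ (the common image of all the $v_i$) is a cut vertex of $H$, so that $H$ decomposes into blocks $G_i$ sharing only $v$. First I would record the elementary distance facts this forces: distances inside a single block are unchanged, $d_H(x,y)=d_{G_i}(x,y)$ for $x,y\in V(G_i)$, while for $x\in V(G_i)$ and $w\in V(G_j)$ with $i\neq j$ every $w$--$x$ path runs through $v$, so $d_H(w,x)=d_{G_j}(w,v)+d_{G_i}(v,x)$. The decisive consequence, which I would isolate as a lemma, is a \emph{distinguishing rule}: a vertex $w$ distinguishes an edge $xy$ of a block $G_i$ in $H$ if and only if either $w\in V(G_i)$ and $w$ distinguishes $xy$ inside $G_i$, or $w\notin V(G_i)$ and $v$ itself distinguishes $xy$ inside $G_i$ (the common term $d(w,v)$ cancels). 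Thus any vertex outside $G_i$ has exactly the distinguishing power of $v$ on the edges of $G_i$; and since a bipartite $G_j$ makes $v$ distinguish all of its edges by a parity argument, a single external vertex already handles every edge of every bipartite block.

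For the first case I would show $H$ is connected and bipartite: each bipartite $G_i$ has a $2$-colouring, and after possibly swapping colours we may assume $v$ receives the same colour in all of them, yielding a proper $2$-colouring of $H$; Theorem~\ref{okamoto bip} then gives $\dim_l(H)=1$. For the second case, let $B_1$ be a local metric basis of the unique non-bipartite $G_1$; the distinguishing rule shows $B_1$ handles every edge of $G_1$ and, by the parity argument, every edge of each bipartite block, so $\dim_l(H)\le\dim_l(G_1)$. For the matching lower bound I would take any local metric set $S$ of $H$ and split on whether $S\subseteq V(G_1)$: if so, $S$ is already a local metric set of $G_1$; if not, $S$ has an outside vertex, and the distinguishing rule says the edges of $G_1$ are exactly those distinguished in $G_1$ by $(S\cap V(G_1))\cup\{v\}$, a local metric set of $G_1$ of size at most $|S\cap V(G_1)|+1\le|S|$. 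Either way $|S|\ge\dim_l(G_1)$.

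The third case is where the real work lies. Write $I_i=V(G_i)\setminus\{v\}$ and $s_i=|S\cap I_i|$. For the upper bound I would, for each block, choose a local metric basis $B_i$ of $G_i$ containing $v$ when $\epsilon_i=1$ and any basis otherwise, and set $S=\bigcup_i(B_i\setminus\{v\})$; then $|S|=\sum_i(\dim_l(G_i)-\epsilon_i)$, and since there are at least two non-bipartite blocks each block sees an $S$-vertex from some other non-bipartite block, so the effective distinguishing set for the edges of $G_i$ is $(S\cap I_i)\cup\{v\}\supseteq B_i$, a local metric set of $G_i$. Hence $S$ distinguishes all edges of $H$ and $\dim_l(H)\le\sum_i(\dim_l(G_i)-\epsilon_i)$.

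For the lower bound --- the main obstacle --- I would first prove that for every non-bipartite block $i$ the distinguishing power of $v$ is available, i.e. $S$ is not contained in a single $I_i$: otherwise some other non-bipartite block $G_j$ would have all its edges distinguished by $\{v\}$ alone, forcing $G_j$ bipartite by Theorem~\ref{okamoto bip}, a contradiction; here the hypothesis of \emph{two} non-bipartite blocks is essential, and is exactly what separates this case from the second. Granting this, the distinguishing rule shows $(S\cap I_i)\cup\{v\}$ is a local metric set of $G_i$, of size $s_i+1$. When $\epsilon_i=1$ this only gives $s_i\ge\dim_l(G_i)-1$; when $\epsilon_i=0$ no local metric basis of $G_i$ contains $v$, so $(S\cap I_i)\cup\{v\}$ cannot have size $\dim_l(G_i)$ and must have size at least $\dim_l(G_i)+1$, giving $s_i\ge\dim_l(G_i)$. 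In both cases $s_i\ge\dim_l(G_i)-\epsilon_i$, while bipartite blocks contribute $0$ to the target (there $\dim_l(G_i)=1$, $\epsilon_i=1$, and $s_i\ge0$ holds trivially), and summing yields $|S|\ge\sum_i s_i\ge\sum_i(\dim_l(G_i)-\epsilon_i)$. I expect the delicate points to be the clean formulation of the distinguishing rule and the $\epsilon_i$-dichotomy, in particular arguing that $\epsilon_i=0$ genuinely costs one extra vertex, together with verifying that the ``$v$-power available'' lemma really needs two non-bipartite blocks.
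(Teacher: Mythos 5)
A point of order first: the paper does not prove Theorem~\ref{J=1} at all --- it is imported from \cite{elementar} as a known result --- so there is no in-paper proof to compare yours against. Judged on its own merits, your argument is correct and complete in all essentials. The distance decomposition at the cut vertex $v$, the distinguishing rule (an $S$-vertex outside a block $G_i$ has exactly the distinguishing power of $v$ on $E(G_i)$, since the common term $d_H(w,v)$ cancels), the parity argument giving every vertex full power on a bipartite block, the case-two lower bound via the split on $S\subseteq V(G_1)$, and the case-three $\epsilon_i$-dichotomy (if $\epsilon_i=0$, a local metric set $(S\cap I_i)\cup\{v\}$ of size $\dim_l(G_i)$ would be a basis containing $v$, contradiction, so $s_i\ge \dim_l(G_i)$) all check out, as does the bookkeeping that bipartite blocks have $\dim_l(G_i)-\epsilon_i=0$.

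One structural remark: your ``availability'' lemma in case three is superfluous, and your diagnosis of where the two-non-bipartite hypothesis bites is slightly misplaced. By your own distinguishing rule, $(S\cap I_i)\cup\{v\}$ is a local metric set of $G_i$ for \emph{every} local metric set $S$ of $H$ and every block $i$, unconditionally: an $S$-vertex inside $G_i$ that distinguishes an edge of $G_i$ already lies in $I_i\cup\{v\}$, and an external one can simply be replaced by $v$. Hence $s_i\ge\dim_l(G_i)-\epsilon_i$ and the lower bound $\dim_l(H)\ge\sum(\dim_l(G_i)-\epsilon_i)$ hold even when only one block is non-bipartite (where the bound is merely weaker than the true value $\dim_l(G_1)$ when $\epsilon_1=1$). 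The hypothesis of at least two non-bipartite blocks is genuinely consumed only in the upper bound, where you already invoke it: it is what guarantees that in the construction $S=\bigcup_i(B_i\setminus\{v\})$ every block sees an external $S$-vertex supplying the power of $v$, and it is exactly this that fails in case two with $\epsilon_1=1$, which is why that case has the larger answer $\dim_l(G_1)$. So the lemma you flag as the delicate point is true and correctly proved, but it is not needed; nothing else in your proof requires repair.
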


We could also see the subgraph-amalgamation of graphs as a pushout in the category of graphs, which is a colimit of a diagram consists of $n$ morphisms $\iota_i : J\rightarrow G_i$ with common domain $J$. For $n=2$, it means that there exist morphisms $f_1$ and $f_2$ for which the following diagram commutes.

\begin{center}
\begin{tikzpicture}[description/.style={fill=white,inner sep=2pt}]
\matrix (m) [matrix of math nodes, row sep=3em,
column sep=2.5em, text height=1.5ex, text depth=0.25ex]
{\amalg\{(G_i|J_i)\} & & G_2 \\
G_1& & J\\ };

\path[<-,font=\scriptsize]
(m-1-1) edge node[auto] {$f_2 $} (m-1-3);
\path[<-,font=\scriptsize] (m-1-1) edge node[auto] {$ f_1 $} (m-2-1);
\path[<-,font=\scriptsize] (m-1-3) edge node[auto] {$ \iota_2 $} (m-2-3);
\path[<-,font=\scriptsize] (m-2-1) edge node[auto] {$ \iota_1 $} (m-2-3);
\end{tikzpicture}
\end{center}

Moreover the subgraph-amalgamation of the family of graphs is universal in respect to the afore-mentioned diagram. Amalgamation occurs in different areas of mathematics, for instance in model theory where it plays a fundamental role in Fra\"{i}ss\'e's theorem \cite{Fraisse} which characterizes classes of countable homogeneous structures. Amalgamation is also one of the natural operations in data-bases \cite{Boyanczik}. Other applications of amalgamation could be found in \cite{ehrig} and \cite{nese}.


\section{Subgraph-amalgamation of Bipartite Graphs}

Obviously $$\dim_l(\amalg\{(G_i|J_i)\})\ge 1$$ and we reach that bound when $\amalg\{(G_i|J_i)\}$ is a connected bipartite graph (Theorem \ref{okamoto bip}). A necessary condition for $\amalg\{(G_i|J_i)\}$ to be bipartite is that all $G_i$s are also bipartite.

\begin{remark}\label{dim 1}
Let $\{G_i\}$ be a family of connected bipartite graphs and $J\cong K_1$ or $K_2$. If $H:=\amalg\{(G_i|J_i)\}$ then $\dim_l(H) =1$.
\end{remark}
\begin{proof}
The result for $J\cong K_1$ follows from Theorem \ref{J=1}. For $J\cong K_2$, let $\{U_i,V_i\}$ be the partition classes of $G_i$ and $V(J_i)=(u^i,v^i)$, where $u^i \in U_i$ and $v^i \in V_i$. Then $H$ is bipartite with partition classes $\{\bigcup U_i, \bigcup V_i\}$.
\end{proof}

In Corollary \ref{bipartitas mas bipartita}, we shall provide generalization of this remark. However, we could also amalgamate two bipartite graphs to obtain a non-bipartite graph, as in the following example.
\begin{example}\label{1+1=2}
Let $G_1\cong P_3$ and $G_2\cong P_4$, where $V(G_1)=\{u_i\}$, and $V(G_2)=\{v_i\}$.  $J\cong\overline{K_2}$ is a common induced subgraph for $\{G_1,G_2\}$, where $V(J_1)=(u_1,u_3)$ and $V(J_2)=(v_1,v_4)$ then $\amalg\{(G_i|J_i)\}\cong C_5$ and so $\dim_l(\amalg\{(G_i|J_i)\}) = 2$.
\end{example}

In fact local metric dimension of subgraph-amalgamation of two bipartite graphs could be as great as desired as we see in the next example. Recall that a \emph{spider} is a tree with a single vertex of degree at least three (called the \emph{head of the spider}) and remaining vertices of degree at most two. If we consider the multiset $\{m_i^{\alpha_i}\}$, we denote by $Sp{\{m_i^{\alpha_i}\}}$ the (unique) spider with exactly $\alpha_i$ pendant vertices at distance $m_i$ from the head.

\begin{example}\label{amal bipartites grande}
Let $G_1\cong Sp{\{2^n\}}$ and $G_2\cong Sp{\{3^n\}}$. Define $V(G_1)=\{u^i_j\}$ and $V(G_2)=\{v^i_j\}$, where the sub-index indicates the distance of the vertex to the head of the spider. If $J\cong \overline{K_{n+1}}$, with $V(J_1)=(u_0^1,\{u^i_2\})$ and $V(J_2)=(v_0^1,\{v^i_3\})$ then $H:=\amalg\{(G_i|J_i)\}\cong\amalg\{(H_i|u_0^1)\}$, where $\{H_i\}$ is a family of $n$ cycles on 5 vertices. Then by Theorem \ref{J=1}, $\dim_l(H) = n$.
\end{example}

On the other hand, if any of the $G_i$'s is not bipartite, neither is $\amalg\{(G_i|J_i)\}$ and, in this case,
$\dim_l(\amalg\{(G_i|J_i)\})\ge 2$.

\section{General Lower and Upper Bounds}

Let $J$ be an induced subgraph of a graph $G$. We define and edge $uv$ in $E(G-J)$ as \emph{parallel to $J$} in $G$, if for every $w\in V(J)$, $d(w,u)=d(w,v)$. We denote the set of parallel edges to $J$ in $G$ as $\parallel(J:G)$.
A set $S\subseteq V(G)$ is \emph{out-solving for $J$} if for every edge $uv\in E(J)$, there exists $s\in S$ such that
$d(s,u) \ne d(s,v)$. An out-solving set is said to be \emph{minimal} if it has the smallest cardinality.

From now on, let us consider $H=\amalg\{(G_i|J_i)\}$.

\begin{lemma}\label{adonis}
If for a vertex $a\in V(H)$ and an edge $uv\in \parallel(J_i:G_i)$, $d_H(a,u) \ne d_H(a,v)$, then
$a\in V(G_i-J_i)$.
\end{lemma}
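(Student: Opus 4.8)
The plan is to prove the contrapositive: assuming $a\notin V(G_i-J_i)$, I will show that $a$ cannot distinguish $u$ and $v$, i.e. that $d_H(a,u)=d_H(a,v)$. Since $uv\in E(G_i-J_i)$, both endpoints lie in $V(G_i-J_i)$, so the vertices excluded by the conclusion are exactly those of $V(J)$ together with those of $V(G_{i'}-J_{i'})$ for $i'\neq i$. The central tool will be the observation that in $H$ the set $V(J)$ is a vertex cut separating $V(G_i-J_i)$ from everything else, which lets me compute the $H$-distance from $a$ to a vertex of $G_i-J_i$ in terms of distances to $V(J)$.

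First I would record the separator property directly from the edge-set of $H$: the only edges meeting $V(G_i-J_i)$ are those of $E(G_i-J_i)$ and those of the form $u^i_j v_k$ with $v_k\in V(J)$; in particular no edge joins $V(G_i-J_i)$ to any $V(G_{i'}-J_{i'})$ with $i'\neq i$. Hence every path in $H$ from $a$ to a vertex of $G_i-J_i$ must pass through $V(J)$. Using this I would establish the distance formula
\[ d_H(a,u)=\min_{w\in V(J)}\bigl(d_H(a,w)+d_{G_i}(w,u)\bigr), \]
valid for every $a\notin V(G_i-J_i)$ and every $u\in V(G_i-J_i)$. The inequality $\le$ is immediate, since for each $w$ a geodesic from $w$ to $u$ inside the copy of $G_i$ is also a walk in $H$, giving $d_H(a,u)\le d_H(a,w)+d_{G_i}(w,u)$. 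For $\ge$ I would take a geodesic $P$ from $a$ to $u$ in $H$ and let $w$ be the \emph{last} vertex of $V(J)$ on $P$; by the cut property the portion of $P$ after $w$ stays inside the single component $V(G_i-J_i)$, hence lies in the copy of $G_i$ and has length at least $d_{G_i}(w,u)$, while the initial portion has length at least $d_H(a,w)$.

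With the formula in hand the conclusion is short. Because $uv\in\parallel(J_i:G_i)$, we have $d_{G_i}(w,u)=d_{G_i}(w,v)$ for every $w\in V(J)$ (identifying $w$ with its copy $w^i\in V(J_i)$). Substituting this into the two instances of the formula, for $u$ and for $v$, yields $d_H(a,u)=d_H(a,v)$, which is precisely the contrapositive. The main obstacle is the $\ge$ direction of the distance formula: one must argue carefully that, although a geodesic of $H$ may weave through several of the copies $G_{i'}$ by repeatedly visiting $V(J)$, its final segment entering $G_i-J_i$ after the last visit to $V(J)$ cannot leave $V(G_i-J_i)$ again, so that segment is confined to the copy of $G_i$ and its length is controlled by $d_{G_i}$. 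Everything else is bookkeeping with the definitions of $H$ and of parallel edges.
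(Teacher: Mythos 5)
Your proposal is correct and takes essentially the same route as the paper: both arguments rest on the fact that $V(J)$ separates $V(G_i-J_i)$ from the rest of $H$, so any geodesic from a vertex $a\notin V(G_i-J_i)$ to $u$ must split at a vertex of $V(J)$, where the parallel condition lets one exchange $u$ for $v$; the paper just runs this as a contradiction with a triangle-inequality chain instead of your contrapositive. Your explicit formula $d_H(a,u)=\min_{w\in V(J)}\bigl(d_H(a,w)+d_{G_i}(w,u)\bigr)$ is the systematic version of the paper's single geodesic-splitting step, and it has the merit of rigorously justifying the equality $d_H(u_1,u)=d_H(u_1,v)$ that the paper asserts without comment even though parallelism is defined via $d_{G_i}$ rather than $d_H$.
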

\begin{proof}
Let $a\in V(H)$ and $uv\in \parallel(J_i:G_i)$ such that, without lost of generality, $d_H(a,u) = d_H(a,v)-1$.
For a contradiction, assume that $a\notin V(G_i-J_i)$. Let $P$ be a minimal path between $a$ and $u$ and consider $u_1\in
V(T) \cap V(J)$. Thus $d_H(a,v) \le d_H(a,u_1)+ d_H(u_1,v) = d_H(a,u_1)+ d_H(u_1,u) = d_H(a,u)$, a contradiction.
\end{proof}

Lemma \ref{adonis} states that for each $G_i$ we have a set $T_i \subseteq V(G_i-J_i)$ such that for every edge $uv\in \parallel(J_i:G_i)$, there exists a vertex $a\in T_i$ such that $d_H(a,u) \ne d_H(a,v)$. Such a set $T_i$ with minimum cardinality is called a \emph{traversal for $\parallel(J_i:G_i)$} or, simply, a traversal when there is no ambiguity.

\begin{remark}\label{suma de traversales}
A set $T$ distinguishes $\cup \parallel(J_i:G_i)$ if and only if for each $G_i$, $T \cap V(G_i)$
distinguishes $\parallel(J_i:G_i)$.
\end{remark}

The afore-mentioned remark leads to a lower bound for $\dim_l(H)$.
\begin{theorem} \label{inferior}
If $T_i$ is a traversal for $\parallel(J_i:G_i)$ and $S$ is a minimal out-solving set for $J$ then
$$\dim_l(H) \ge \sum |T_i| + |S|$$
and the bound is tight.
\end{theorem}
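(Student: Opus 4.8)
The plan is to bound an arbitrary local metric basis $B$ of $H$ from below by splitting it along the natural partition $V(H)=\bigl(\bigcup_i V(G_i-J_i)\bigr)\cup V(J)$, charging the summand $\sum|T_i|$ to the disjoint pieces $V(G_i-J_i)$ and the summand $|S|$ to $V(J)$, and then proving these two counts are genuinely additive.

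First I would dispose of the parallel edges, which is routine. Since $B$ distinguishes every edge of $H$, it distinguishes $\bigcup_i\parallel(J_i:G_i)$; by Lemma \ref{adonis} only vertices of $V(G_i-J_i)$ can distinguish an edge of $\parallel(J_i:G_i)$, so Remark \ref{suma de traversales} forces $B\cap V(G_i-J_i)$ to distinguish $\parallel(J_i:G_i)$. As $T_i$ is a traversal of minimum cardinality, $|B\cap V(G_i-J_i)|\ge|T_i|$, and because the sets $V(G_i-J_i)$ are pairwise disjoint these estimates add to $\sum_i|T_i|$.

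Next I would account for $E(J)$. The observation I would develop is a geodesic-projection lemma in the spirit of Lemma \ref{adonis}: if $a\in V(G_i-J_i)$ distinguishes an edge $uv\in E(J)$, say $d_H(a,u)<d_H(a,v)$, then on a shortest $a$-to-$u$ path the first vertex $t\in V(J)$ satisfies $d_H(a,u)=d_H(a,t)+d_H(t,u)$ while $d_H(a,v)\le d_H(a,t)+d_H(t,v)$, whence $d_H(t,u)<d_H(t,v)$; thus the distinguishing duty for any edge of $J$ can always be relocated to a vertex of $V(J)$, with no isometry hypothesis needed. Combining this with the forced estimate of the previous paragraph, I would argue that whenever $B$ uses only the minimum number $\sum|T_i|$ of outside vertices, its inner part $B\cap V(J)$ must itself be out-solving for $J$, so $|B\cap V(J)|\ge|S|$ and hence $|B|\ge\sum_i|T_i|+|S|$; any surplus outside vertices only increases $|B|$ and are absorbed into the same inequality.

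The hard part will be precisely this additivity, since a priori a single outside vertex can serve both as a traversal vertex and as an out-solver for $E(J)$, threatening to collapse the sum into a maximum. What rescues the bound is a parity rigidity I would make explicit: if $uv\in\parallel(J_i:G_i)$ and $w\in V(J)$ realises $d_H(w,u)=d_H(w,v)$, then $w\to u\to v\to w$ is a closed walk of odd length $2d_H(w,u)+1$, so any $G_i$ contributing a nonempty traversal is necessarily non-bipartite; carrying this structural constraint through the projection argument to forbid the forced traversal vertices from discharging the $E(J)$ requirement for free is the technical crux. Finally, for tightness I would not prove a matching upper bound in general but instead exhibit a concrete family realising equality: amalgamating suitable small non-bipartite graphs over an edge $J\cong K_2$ (so $|S|=1$) with exactly the parallel edges captured by the $T_i$, I would verify directly that $\bigl(\bigcup_i T_i\bigr)\cup S$ is a local metric set of $H$, which together with the lower bound forces $\dim_l(H)=\sum_i|T_i|+|S|$.
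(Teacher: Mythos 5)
Your first half coincides with the paper's own argument: by Lemma \ref{adonis} only vertices of $V(G_i-J_i)$ can distinguish an edge of $\parallel(J_i:G_i)$, so by Remark \ref{suma de traversales} and the minimality of the traversal any local metric set $B$ satisfies $|B\cap V(G_i-J_i)|\ge |T_i|$, and these pieces are pairwise disjoint; your geodesic-projection observation (the gate vertex $t\in V(J)$ on a shortest $a$--$u$ path satisfies $d(t,u)<d(t,v)$) is also correct and is the same computation the paper uses inside Lemma \ref{adonis}. The genuine gap is exactly where you yourself locate it, the additivity step, and your proposed rescue does not close it. Relocating the distinguishing duty to a gate $t\in V(J)$ does not place $t$ in $B$, so ``$B$ uses only $\sum|T_i|$ outside vertices, hence $B\cap V(J)$ is out-solving'' is a non sequitur (indeed, every edge $uv\in E(J)$ is trivially distinguished by $u$ itself, so relocatability alone yields no counting information). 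And the ``parity rigidity'' proves only that a $G_i$ with $\parallel(J_i:G_i)\neq\emptyset$ is non-bipartite; it imposes no constraint at all on whether a traversal vertex $t$ satisfies $d(t,u)\neq d(t,v)$ for $uv\in E(J)$, which it easily can.

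In fact the double-duty collapse you fear really occurs, so no argument along your lines can succeed in the stated generality: let $J\cong K_2$ with $V(J)=\{u,v\}$, and let $G_1\cong G_2$ be the graph on $\{u,v,x_i,y_i\}$ with edges $uv$, $ux_i$, $uy_i$, $x_iy_i$ (a triangle at $u$ with pendant $v$). Then $x_iy_i\in\parallel(J_i:G_i)$, so $|T_i|=1$ for each $i$, and $|S|=1$; yet $\{x_1,x_2\}$ is a local metric set for $H$, since $x_1$ resolves $uv$ ($d(x_1,u)=1\neq 2=d(x_1,v)$), $x_i$ resolves $x_iy_i$ and $ux_i$, and $x_j$ with $j\neq i$ resolves $uy_i$ ($d(x_j,u)=1\neq 2=d(x_j,y_i)$). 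Hence $\dim_l(H)=2<3=\sum|T_i|+|S|$: the traversal vertices discharge the $E(J)$ requirement for free. For what it is worth, the paper's own proof makes precisely the leap you flagged: after observing that a hypothetical small basis $C$ ``must be an out-solving set for $J$,'' it concludes by pigeonhole that some $|C_k|<|T_k|$, which is valid only under the unstated assumption that the out-solving duty is carried by at least $|S|$ vertices not counted among the traversal parts (effectively $|C\cap V(J)|\ge|S|$). So your instinct that the additivity is the technical crux was correct, but the crux cannot be closed by parity, and the inequality needs an additional hypothesis (for instance, that no traversal vertex distinguishes any edge of $J$) to hold as stated; your tightness plan, by contrast, is unobjectionable and parallels the paper's join-based construction.
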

\begin{proof}
Assume for the contrary that there exists a local metric basis $C$ for $H$ such that $|C| < |\cup T_i \cup S|$. Then $C$ must be an out-solving set for $J$. For each $G_i$ we define $V(C_i) = V(C) \cap V(G_i)$ and so there exists a $C_k$ such that $|C_k| < |T_k|$. By Remark \ref{suma de traversales}, $C_k$ is not a traversal for $\parallel(J_k:G_k)$. Thus there exists $uv\in \parallel(J_k:G_k)$ which is not resolved by $C_k$. From Lemma \ref{adonis}, there is no vertex in $V(C) - \bigcup_{i \ne k} V(G_i-J_i)$ resolving such an edge. That is a contradiction and the result follows.

The tightness of the bound could be seen in the following example. Let $G_i \cong F_i + J$ where $\{F_i\}$ is a collection of at least two graphs and $J$ is an arbitrary graph. Then for $u_0 \in J$ and $v_0 \in F_i$ we have that $\dim_l(H)= \sum (\dim_l(u_0+F_i)-\epsilon_i) + (\dim_l(v_0+J)-\epsilon_J)$, where $\epsilon_i=0$ if $u_0$ is not contained in any basis of $u_0 + F_i$ or $1$ otherwise, and $\epsilon_J$ is defined in an analogous way. On the other hand, since $\parallel(J_i|G_i)= E(F_i)$, then we could choose $B_i-\{u_0\}$ as a traversal for $\parallel(J_i|G_i)$, where $B_i$ is a local metric basis for $u_0+F_i$. We also have that, for $S$ an out-solving set for $J$, $S\cap V(M_i)=\emptyset$ and so we could choose $B_J-\{v_0\}$ as a minimal out-solving set for $J$, where $B_J$ is a local metric basis for $v_0+J$. This leads to $\dim_l(H) = \sum |T_i| + |S|$.
\end{proof}

Another example for the tightness of the bound in Theorem \ref{inferior} is as follow.
\begin{example} \label{prismas} Let $G_1\cong W_{1,n}$ be a wheel on $n+1$ vertices, $G_2\cong C_{n}\square K_2$ be a prism on $2n$ vertices, and  $J \cong C_{n}$. Then $\dim_l(H)= \left\lceil\frac{n}{4}\right\rceil = 0 + 0 + \left\lceil\frac{n}{4}\right\rceil = |T_1|+ |T_2|+ |S|$.
\end{example}

By Example \ref{amal bipartites grande} we could see that we could not state a general upper bound for the local metric dimension of a subgraph-amalgamation of graphs as a function of the local metric dimension of the amalgamated graphs. In that example, the order of the induced subgraph $J$ is unbounded, however the next example shows that even if both the local metric dimension of the amalgamated graphs and the order of $J$ are bounded, the local metric dimension of the subgraph-amalgamation graph remains unbounded.

\begin{example}\label{watermelon}
We start with constructing the first amalgamated graph. Consider a cycle of order 17, with vertex-set $\{u_i\}$. We introduce  a new vertex $v$, together with four edges $u_1u_9$, $u_5u_{14}$, $vu_5$, and $vu_{14}$. We shall denote the resulting graph with $Q$. For $n\ge 4$, let $\{Q_i\}$ be a family of $n$ graphs with $Q_i \cong Q$. Let $I\cong K_2\cup K_1$ where $V(I)=\{a,b,c\}$ and $E(I)=\{ab\}$. Define $G_1=\amalg\{(Q_i|I_i)\}$ with $V(I_i) = \{u^i_1,u^i_9,v^i\}$ and $E(I_i)=\{u^i_1u^i_9\}$. We have $\dim_l(G_1)=2$ since the vertices $a$ and $b$ distinguish $E(G_1)$. Let the second amalgamated graph be $G_2 \cong v_0 + P_3$ with $V(G_2)=\{v_0,w_1,w_2,w_3\}$. Clearly, $\dim_l(G_2)=2$ being $\{v_0,w_2\}$ is a local metric basis. Now we are ready to define the subgraph-amalgamation $H:=\amalg\{(G_i|J_i)\}$, where $J\cong \overline{K_2}$, $V(J_1)=(b,c)$, and $V(J_2)=(w_1,w_3)$.

To show that the local metric dimension of $H$ is unbounded, consider the edge $u^i_5u^i_{14}$ for an arbitrary fixed $i$. We have $d_H(a,u^i_5) = 4 = d_H(a,u^i_{14})$, $d_H(b,u^i_5) = 3 = d_H(b,u^i_{14})$, $d_H(c,u^i_5) = 1 = d_H(c,u^i_{14})$, and $d_H(z,u^i_5)= 2 = d_H(z,u^i_{14})$, if $z\in \{v_0,w_2\}$. For any vertex in $z\in V(Q_j)$, $j\ne i$, the following hold. $d_H(z,u^i_5) =  5 = d_H(z,u^i_{14})$, if $z \in N(a) - \{b\}$, or $d_H(z,u^i_5) =  k = d_H(z,u^i_{14})$, where $k \in \{3,4\}$, if $z\in N(b)$, or $d_H(z,u^i_5) =  \min\{d_H(z,u^j_5),d_H(z,u^j_{14})\} + 2 = d_H(z,u^i_{14})$, for other $z$. We conclude that to resolve $u^i_5u^i_{14}$ we need a vertex in $Q_i-J$. Since $v_0w_2\in \parallel(S_2:G_2)$, $\dim_l(H)= n+1$.
\end{example}

In general we have a very crude upper bound.
\begin{theorem}\label{cruda}
If $\{G_i\}$ is a family of $n$ graphs and $n_H=|V(H)|$ then $$\dim_l(H) \le n_H - (n+1)$$ and the bound is tight.
\end{theorem}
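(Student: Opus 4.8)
The plan is to produce an explicit local metric set of the required size by deleting $n+1$ vertices from $V(H)$ and checking that the survivors still distinguish every edge. The guiding principle is the elementary observation that a set $S\subseteq V(H)$ distinguishes an edge $uv$ as soon as one of $u,v$ lies in $S$ (an endpoint always distinguishes its own edge), so $V(H)\setminus W$ can fail to be a local metric set only on account of edges having \emph{both} endpoints in the deleted set $W$ that are moreover left unresolved by every surviving vertex. Hence it suffices to exhibit a deletion set $W$ with $|W|=n+1$ all of whose internal edges are distinguished from outside.

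First I would fix one vertex $p\in V(J)$ and, for each $i$, one vertex $q_i\in V(G_i-J_i)$, and set $W=\{p,q_1,\dots,q_n\}$; this is legitimate once $n\ge 2$, $J$ is nonempty, and each $G_i$ properly contains $J_i$, which I take to be the standing hypotheses. Because the amalgamation introduces no edges between distinct parts $G_i-J_i$ and $G_{i'}-J_{i'}$, the vertices $q_1,\dots,q_n$ are pairwise nonadjacent, so the only possible internal edges of $W$ are those of the form $pq_i$. Whenever $G_i-J_i$ contains a vertex nonadjacent to $p$ I would choose $q_i$ to be such a vertex, removing the edge altogether; the remaining troublesome parts are exactly those in which $p$ is adjacent to every vertex of $G_i-J_i$.

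For such an edge $pq_i$ the resolver must come from outside $W$, and I would take it in another part. The key distance fact is that every walk from a vertex $z$ lying outside $G_i$ to $q_i$ must meet $V(J)$, so that $d_H(z,q_i)=\min_{t\in V(J)}\bigl(d_H(z,t)+d_{G_i}(t,q_i)\bigr)$; since $q_i\notin V(J)$ each term $d_{G_i}(t,q_i)$ is at least $1$, and choosing the gateway vertex $p$ to be a $J$-vertex closest to $z$ forces $d_H(z,q_i)\ge d_H(z,p)+1>d_H(z,p)$, so $z$ distinguishes $pq_i$. Carrying this out simultaneously for all troublesome indices $i$ while guaranteeing that the chosen resolvers survive the deletion (which uses $n\ge 2$ and may require routing a resolver through $J$ when some part is a single vertex) is the delicate point, and I expect the distance bookkeeping here---in particular ruling out shortcuts through other parts---to be the main obstacle.

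Finally, for tightness I would take $J\cong K_1$ and $G_i\cong K_{m_i}$ with each $m_i\ge 3$, so that $H$ is the amalgamation of $n$ cliques at a common apex. Here $n_H=1+\sum(m_i-1)$, each $G_i$ is non-bipartite with $\dim_l(K_{m_i})=m_i-1$ by Theorem~\ref{ocamoto complete}, and the apex lies in some local metric basis of $K_{m_i}$, so Theorem~\ref{J=1} gives $\dim_l(H)=\sum\bigl((m_i-1)-1\bigr)=\bigl(\sum(m_i-1)\bigr)-n=n_H-(n+1)$, matching the bound.
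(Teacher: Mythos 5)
Your reduction is sound: since every vertex distinguishes the edges incident to it, $V(H)\setminus W$ can fail only on edges with both endpoints in $W=\{p,q_1,\dots,q_n\}$, and these are among the edges $pq_i$; your tightness example ($J\cong K_1$, $G_i\cong K_{m_i}$, $m_i\ge 3$, $n\ge 2$, computed via Theorem~\ref{J=1}) is correct and arguably cleaner than the paper's. But the step you yourself flag as ``the delicate point'' is a genuine gap, and it is fatal: your key distance fact is circular. The inequality $d_H(z,q_i)\ge d_H(z,p)+1$ holds only when $p$ is a $J$-vertex \emph{closest to $z$}, yet $p$ is fixed once and for all before the resolvers $z$ are sought; for a fixed $p$, a shortest $z$--$q_i$ path may enter $G_i$ through a different gateway $t\in V(J)$ with $d_H(z,t)<d_H(z,p)$, giving $d_H(z,q_i)\le d_H(z,p)$, and when $|V(G_j-J_j)|=1$ the only candidate resolver in part $j$ is the deleted vertex $q_j$. (Incidentally, your gateway identity should read $d_H(z,q_i)=\min_{t\in V(J)}\bigl(d_H(z,t)+d_H(t,q_i)\bigr)$ rather than with $d_{G_i}(t,q_i)$, since $J$-vertices may be joined by shortcuts through other parts; the consequence you draw survives, but only under the closest-gateway assumption.) No bookkeeping can close this gap, because the statement as given is false: take $n=2$, $G_1\cong G_2\cong K_4$, $J\cong K_3$, so that $H$ is $K_5$ minus an edge and $n_H-(n+1)=2$, while $\dim_l(H)=3$. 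Indeed, the three $J$-vertices are pairwise adjacent true twins, so any local metric set contains two of them; if $c$ is the omitted $J$-vertex and $q_1,q_2$ the two non-$J$ vertices, the edge $cq_1$ is distinguished only by $c$, $q_1$ and $q_2$, forcing a third vertex.

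For comparison, the paper's proof is the same construction as yours but silently commits exactly the error you were careful to isolate: it asserts $d_H(w,u)=d_H(w,v)+1$ for \emph{every} $w\in V(G_j-J_j)$, $j\ne i$, which fails whenever shortest paths from $w$ enter $G_i$ through a $J$-vertex other than $v$, and it overlooks that such a $w$ may itself be one of the deleted vertices $v_j$. The example above is the paper's own tightness family $\amalg\{(K_{m_i}|K_r)\}$ specialized to $m_i=r+1$, where the claimed value $\sum m_i-(r+1)(n-1)-2=2$ is contradicted by $\dim_l(H)=3$. So your proposal is incomplete precisely where the paper's argument is wrong, and your instinct that the simultaneous choice of $p$ and surviving resolvers is ``the main obstacle'' was correct: the bound needs additional hypotheses (at minimum $|V(G_i-J_i)|\ge 2$ for each $i$, and some control on adjacencies between $V(J)$ and the parts, to rule out twin configurations like the clique example) before any version of this deletion argument can go through.
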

\begin{proof}
Let $u\in V(J)$ and $v_i$ be a chosen vertex in each $V(G_i-J_i)$. We define $A=\{v_i\}\cup\{u\}$ and $B=V(H)-A$. Clearly $|B|= |V(H)| - |A|= n_H - (n+1)$. Let $uv$ be an edge with $u,v\in V(H)-B$ where $u\in V(G_i-J_i)$ and $v\in V(J_i)$. For any vertex $w\in V(G_j-J_j)$, $j \ne i$, we have $d_H(w,u)=d_H(w,v)+1$. This confirms that $B$ is a local metric metric set for $H$.

For the tightness, consider $H:=\amalg\{(K_{m_i}|K_r)\}$, for $m_i$ and $r$ are positive integers with $m_i>r$. Then
$\dim_l(H) = \sum m_i - (r+1) (n-1) - 2 = n_H - (n+1)$.
\end{proof}

However we could amalgamate arbitrarily large bipartite graphs obtaining yet another bipartite graph and thus show that the
upper bound is, in a lot of cases, still far away from the real value.

\begin{example} \label{amal bipartites pequenya}
Let $\{m_i\}$ be a set of odd numbers greater than 1 and $G_i\cong P_{m_i}$. We define $J\cong \bar{K_2}$, with $V(J_i)=\{u^i_1,u^i_{m_i}\}$ where $u^i_1$ and $v^i_{m_i}$ are the pendants in the paths. We have $n_H - (n+1) = 2 + \sum (m_i-2) - (n+1) = 1 + \sum(m_i-3)$, but $\dim_l(H) = 1$, since $H$ is connected and bipartite.
\end{example}

In the next section, we shall consider a special case of subgraph-amalgamation where we could have more information about the local metric dimension.

\section{Subgraph-amalgamation with Isometric Embedding}

As in the previous section, let $\{G_i\}$ be a family of graphs with common induced subgraph $J$ and embeddings $J_i$, and we denote $H:=\amalg\{(G_i|J_i)\}$. If $\iota_i : J \rightarrow G_i$ is an embedding of $J$ in $G_i$ we call $J_i:=\iota_i(J)$ the image of the embedding and, for each vertex $v\in V(J)$, $v^i:=\iota_i(v)$. If $J$ is a subgraph of $G$, we say that $J$ is \emph{isometrically embedded} in $G$, if for every $u,v\in V(G)$, $d_J(u,v)=d_G(u,v)$. We remark that an isometrically embedded subgraph is also an induced subgraph. We also say that $\{(G_i|J_i)\}$ is \emph{isometric} if for every $G_i,G_j$ and $a,b\in V(J)$, $d_{G_i}(a^i,b^i)=d_{G_j}(a^j,b^j)$. In the next lemma, we shall show that the notions of isometrically embedded and isometric are equivalent.

\begin{lemma} \label{isometricity}
Let $G_i$ be a connected graph. $G_i$ is isometrically embedded in $H$ if and only if
$\{(G_i|J_i)\}$ is isometric.
\end{lemma}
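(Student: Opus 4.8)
The plan is to prove the biconditional, reading the statement in the natural way that \emph{every} $G_i$ is isometrically embedded in $H$ if and only if the family $\{(G_i|J_i)\}$ is isometric, by comparing distances in each $G_i$ with distances in $H$ through an analysis of how geodesics of $H$ interact with the shared copy of $J$. The starting observation is that each $G_i$ sits inside $H$ as a subgraph once the copy $J_i$ is identified with $J$, so every path of $G_i$ survives in $H$ and hence $d_H(u,v) \le d_{G_i}(u,v)$ for all $u,v \in V(G_i)$. An isometric embedding is therefore equivalent to the reverse inequality $d_H(u,v) \ge d_{G_i}(u,v)$, and the whole argument reduces to controlling when a path in $H$ can be strictly shorter than the best path available inside $G_i$.

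For the forward direction I would argue that a path in $H$ can leave the subgraph $G_i$ only by passing through a vertex of $J$: every vertex outside $G_i$ lies in some $G_j-J_j$, and its neighbours lie in $V(G_j)=V(G_j-J_j)\cup V(J)$. Thus, given a shortest $u$-$v$ path $P$ in $H$ with $u,v\in V(G_i)$, the visits of $P$ to $V(J)$ cut $P$ into segments whose interior either stays in $G_i$ or lies entirely in a single region $G_j-J_j$, and every segment of the latter type has both endpoints $a,b$ in $V(J)$. Such a segment is a path in $G_j$ from $a^j$ to $b^j$, so its length is at least $d_{G_j}(a^j,b^j)$, which by the isometric hypothesis equals $d_{G_i}(a^i,b^i)$; hence it can be replaced by a shortest $a$-$b$ path inside $G_i$ without increasing the total length. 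Performing this surgery on every out-of-$G_i$ segment produces a $u$-$v$ walk lying entirely in $G_i$ of length at most $|P|=d_H(u,v)$, so $d_{G_i}(u,v)\le d_H(u,v)$ and equality follows.

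The converse is the short direction. Assuming every $G_i$ is isometrically embedded, fix indices $i,j$ and vertices $a,b\in V(J)$. In $H$ the images $a^i$ and $a^j$ coincide with the single shared vertex $a$, and likewise $b^i=b^j=b$. Isometric embedding of $G_i$ gives $d_{G_i}(a^i,b^i)=d_H(a,b)$, and isometric embedding of $G_j$ gives $d_{G_j}(a^j,b^j)=d_H(a,b)$; comparing these yields $d_{G_i}(a^i,b^i)=d_{G_j}(a^j,b^j)$, which is precisely the isometric condition.

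The main obstacle is the path-surgery step in the forward direction: one must verify that the visits to $J$ genuinely decompose a geodesic of $H$ into segments of only the two admissible types, so that no excursion can silently switch between outer regions without re-entering $J$. This is where connectivity of the $G_i$ and the fact that $J$ is an \emph{induced} subgraph, so that adjacencies across the amalgamation are exactly those inherited from the $G_i$, are used; once the decomposition is justified, the replacement argument and the length bookkeeping are routine.
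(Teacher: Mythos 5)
Your proposal is correct and takes essentially the same route as the paper's proof: the easy direction follows by comparing $d_{G_i}(a^i,b^i)=d_H(a,b)=d_{G_j}(a^j,b^j)$, and the substantive direction replaces excursions of an $H$-geodesic outside $G_i$ (which must enter and leave through $V(J)$) by $G_i$-geodesics between the corresponding $J$-vertices, using the hypothesis $d_{G_i}(a^i,b^i)=d_{G_j}(a^j,b^j)$ to control the lengths. If anything, your version is slightly more careful than the paper's, since you decompose the geodesic at \emph{all} visits to $V(J)$ and perform the surgery on every out-of-$G_i$ segment (and explicitly note where inducedness of $J$ is needed), whereas the paper carries out a single replacement step in somewhat looser notation.
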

\begin{proof}
The necessity of the condition is clear. Now suppose that for every $i,j$ and $a,b\in V(J)$, $d_{G_i}(a^i,b^i)= d_{G_j}(a^j,b^j)$. Consider $u,v\in V(G_i)$ and let $P_{uv}$ be a minimal path in $H$ between $u$ and $v$. If $P_{uv} \subseteq G_i$ we are done, otherwise there exist $j \ne i$ and $b \in V(G_j)$ such that $b \in P_{uv}$. Let  $u_1 \in V(J) \cap P_{ub}$  and $v_1 \in V(J) \cap P_{vb}$ such that $d_H(u_1,u)$ and $d_H(v_1,v)$ are minimal. By hypotheses there exist $P_{u_1v_1} \subseteq G_i$ a minimal path between $u_1$ and $v_1$ with $length(P_{u_1v_1})= d_{G_i}(u_1,v_1)= d_{G_j}(u_1,v_1) \le d_{G_j}(u_1,b) + d_{G_j}(b,v_1)$. If we define $P^*=P_{uu_1} \cup P_{u_1v_1} \cup P_{v_1v}$, then  $P^* \subseteq G_i$ and $P^*$ is a $u,v$ path with length at most $d_{G_j}(u,v)$, so $d_{G_i}(u,v) \le d_{G_j}(u,v)= length(P_{uv})$ and, as $P_{uv}$ is a minimal path, the result follows.
\end{proof}

The following gives a sufficient condition for isometric embedding.
\begin{lemma}\label{diam2 imp iso}
If diameter of $J$ is at most $2$ then every $G_i$ is embedded isometrically in $H$.
\end{lemma}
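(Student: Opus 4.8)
The plan is to reduce the claim to the isometric condition via Lemma \ref{isometricity} and then exploit the fact that an induced subgraph of small diameter pins down all the relevant distances. By Lemma \ref{isometricity}, it suffices to prove that $\{(G_i|J_i)\}$ is isometric, that is, that $d_{G_i}(a^i,b^i)=d_{G_j}(a^j,b^j)$ for every pair of indices $i,j$ and every $a,b\in V(J)$. Since the diameter of $J$ is at most $2$, for any $a,b\in V(J)$ we have $d_J(a,b)\in\{0,1,2\}$, and the strategy is to show that in each of these three cases $d_{G_i}(a^i,b^i)$ coincides with $d_J(a,b)$ for every $i$; this value depends only on $J$ and is therefore independent of $i$.

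First I would record the inequality $d_{G_i}(a^i,b^i)\le d_J(a,b)$, which holds because any shortest path from $a$ to $b$ in $J$ maps, under the embedding $\iota_i$, to a walk of the same length in $J_i\subseteq G_i$. The cases $d_J(a,b)=0$ and $d_J(a,b)=1$ are then immediate: if $a=b$ the distance is $0$ in every $G_i$, and if $ab\in E(J)$ then $a^ib^i\in E(G_i)$ (because $J_i$ is an induced subgraph), so the distance is exactly $1$ in every $G_i$.

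The remaining case $d_J(a,b)=2$ is the heart of the argument. Here $ab\notin E(J)$, and since $J_i$ is an \emph{induced} subgraph this non-adjacency is preserved, giving $a^ib^i\notin E(G_i)$ and hence $d_{G_i}(a^i,b^i)\ge 2$. Combining this with the upper bound $d_{G_i}(a^i,b^i)\le d_J(a,b)=2$ from the previous paragraph forces $d_{G_i}(a^i,b^i)=2$ for every $i$. This is where the diameter hypothesis is essential: it guarantees there is no pair whose distance in $J$ exceeds $2$, for which the induced-subgraph constraint would no longer determine the distance in $G_i$, since a longer geodesic of $J$ could in principle be short-circuited inside $G_i$.

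Having established $d_{G_i}(a^i,b^i)=d_J(a,b)$ for all $a,b\in V(J)$ and all $i$, the isometric condition for $\{(G_i|J_i)\}$ follows, and Lemma \ref{isometricity} then yields that every $G_i$ is isometrically embedded in $H$. I expect the only genuine obstacle to be the $d_J(a,b)=2$ case, where one must rule out the distance collapsing to $1$; this is handled cleanly by the fact that $J_i$ is induced, so the edge and non-edge structure of $J$ transfers verbatim to $G_i$.
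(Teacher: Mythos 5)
Your proof is correct and follows essentially the same route as the paper: both reduce the problem to pairs of $J$-vertices via Lemma \ref{isometricity} and use the induced-subgraph property together with $\mathrm{diam}(J)\le 2$ to rule out any distance strictly smaller than the one prescribed by $J$. The only difference is presentational: the paper argues by contradiction inside $H$ (a shortcut would force $u\sim_H v$, contradicting that $G_i$ is induced in $H$), whereas you directly verify $d_{G_i}(a^i,b^i)=d_J(a,b)\in\{0,1,2\}$, establishing that the family is isometric without referring to $H$ at all.
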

\begin{proof}
Assume that $G_i$ is not embedded isometrically in $H$, that means that there exist $u,v\in V(G_i)$ such that $d_{G_i}(u,v) \ne d_H(u,v)$, necessarily $d_H(u,v) < d_{G_i}(u,v)$. For Lemma \ref{isometricity} we could suppose that $u,v \in V(J_i)$, and so $d_H(u,v) < d_{G_i}(u,v) \le d_{J_i}(u,v) \le 2$. Thus $u\sim_H v$ and as $G_i$ is an induced subgraph of $H$, $u\sim_{G_1} v$, a contradiction.
\end{proof}

The next lemma generalizes result in Remark \ref{dim 1} on subgraph-amalgamation of connected bipartite graphs.

\begin{corollary}\label{bipartitas mas bipartita}
Let $\{(G_i|J_i)\}$ be isometric. If for every $i$, $G_i$ is connected and bipartite,  then $\dim_l(H)=1$.
\end{corollary}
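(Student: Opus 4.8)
The plan is to reduce everything to Theorem \ref{okamoto bip}: it suffices to show that $H$ is a connected bipartite graph, and then $\dim_l(H)=1$ follows at once. Connectivity is immediate, since each $G_i$ is connected and every copy shares the nonempty subgraph $J$, so any two vertices of $H$ are joined by a walk passing through $V(J)$. The entire substance of the argument is therefore to prove that $H$ is bipartite, and this is where the isometric hypothesis enters.

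To produce a proper $2$-colouring of $H$, I would first fix a reference vertex $v_0\in V(J)$ and, for each $i$, define $c_i$ on $V(G_i)$ by $c_i(u)=d_{G_i}(v_0^i,u)\bmod 2$. Because $G_i$ is connected and bipartite, colouring by the parity of the distance from $v_0^i$ is a proper $2$-colouring of $G_i$. The useful structural remark is that every edge of $H$ lies inside a single copy: by the definition of the edge-set of $\amalg\{(G_i|J_i)\}$ there are no edges between $V(G_i-J_i)$ and $V(G_j-J_j)$ for $i\ne j$, so each edge is either an edge of some $E(G_i-J_i)$, an edge of $E(J)$, or a connecting edge arising from an edge of a single $G_i$. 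Consequently any colouring of $V(H)$ that restricts on each $G_i$ to a proper colouring of $G_i$ is automatically proper on all of $H$.

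The central step is then to glue the $c_i$ into one well-defined colouring of $V(H)$, i.e. to check that they agree on the identified vertices of $V(J)$. For $b\in V(J)$ its colour in copy $i$ is $c_i(b^i)=d_{G_i}(v_0^i,b^i)\bmod 2$; I would show this is independent of $i$ by applying the isometric hypothesis with $a=v_0$, which gives $d_{G_i}(v_0^i,b^i)=d_{G_j}(v_0^j,b^j)$ for all $i,j$. These distances then share the same parity, so the common value $c(b):=c_i(b^i)$ is well defined and extends to a map $c:V(H)\to\{0,1\}$ that restricts to $c_i$ on each $G_i$. By the previous paragraph $c$ is a proper $2$-colouring, hence $H$ is bipartite and Theorem \ref{okamoto bip} yields $\dim_l(H)=1$.

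I expect the main obstacle to be precisely this gluing step: ensuring the per-copy colourings are compatible on $J$. The isometric condition is exactly what forces the parities of the distances within $J$ to coincide across all copies; without it the colourings may clash and create an odd cycle, as already happens in Example \ref{1+1=2}, where two bipartite paths amalgamate non-isometrically into $C_5$.
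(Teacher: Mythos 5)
Your proof is correct, but it takes a genuinely different route from the paper's. The paper's argument is a one-liner resting on Lemma \ref{union de generadores}: fix $v\in V(J)$; since each $G_i$ is connected and bipartite, the singleton $\{v^i\}$ is a local metric set for $G_i$ (adjacent vertices have distances of different parity from any fixed vertex), and since all the copies $v^i$ are identified to the single vertex $v$ in $H$, isometricity together with Lemma \ref{union de generadores} makes $\{v\}=\bigcup\{v^i\}$ a local metric set for $H$, whence $\dim_l(H)=1$; the bipartiteness of $H$ is then obtained only \emph{a posteriori}, via the characterization in Theorem \ref{okamoto bip}. You argue in the opposite logical direction: you prove bipartiteness of $H$ directly by gluing the parity colourings $c_i(u)=d_{G_i}(v_0^i,u)\bmod 2$, using the isometric condition with $a=v_0$ to check compatibility on the identified vertices of $V(J)$, and then invoke the other implication of Theorem \ref{okamoto bip}. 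Your route buys two things: it makes explicit the structural fact that $H$ is itself bipartite (which the paper only gets indirectly), and it uses strictly less than isometricity --- your gluing step needs only $d_{G_i}(a^i,b^i)\equiv d_{G_j}(a^j,b^j)\pmod 2$ for $a,b\in V(J)$, so your argument establishes a slightly more general statement. The paper's route is shorter because Lemma \ref{union de generadores} is already in place and does the distance bookkeeping once and for all. Your proof is also careful at exactly the right spot: the compatibility of the colourings on $V(J)$ is precisely what fails in Example \ref{1+1=2}, and your observation that every edge of $H$ lies inside a single copy of some $G_i$ (immediate from the definition of the amalgamation's edge set) correctly reduces properness of the glued colouring to properness on each copy.
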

\begin{proof}
For every $v \in V(J)$,  $\{v_i\}$  is a local metric set for $G_i$, then $\{v\}$ is a metric set for $H$. Thus $H$ is connected and bipartite, and therefore $\dim_l(H)=1$.
\end{proof}

\begin{corollary}\label{las bipartitas no cuentan}
Let ${\cal G}=\{G_i\}$ be a family of connected graphs with at least one $G_i$ non-bipartite and ${\cal H}\subseteq{\cal G}$ be the subset of all non-bipartite graphs in ${\cal G}$. If $J$ is isometrically embedded then $$\dim_l(\amalg_{\cal G}\{(G_i|J_i)\})= \dim_l(\amalg_{\cal H}\{(G_i|J_i)\}).$$
\end{corollary}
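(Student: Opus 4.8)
The plan is to prove the two inequalities $\dim_l(H_{\cal H})\le\dim_l(H_{\cal G})$ and $\dim_l(H_{\cal G})\le\dim_l(H_{\cal H})$ separately, where I abbreviate $H_{\cal G}:=\amalg_{\cal G}\{(G_i|J_i)\}$ and $H_{\cal H}:=\amalg_{\cal H}\{(G_i|J_i)\}$ (the latter is non-null since some $G_i$ is non-bipartite). Three preliminary facts do most of the work. First, since all embeddings are isometric they share a common $J$-metric, so routing through a bipartite block never shortens a path between two vertices of $J$; hence, by the argument of Lemma \ref{isometricity}, $H_{\cal H}$ is an isometric subgraph of $H_{\cal G}$, and in particular a vertex of $V(H_{\cal H})$ distinguishes exactly the same edges of $H_{\cal H}$ in both graphs. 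Second, if $G_0$ is bipartite then the endpoints of any edge $uv\in E(G_0)$ lie in different colour classes, so for every $a\in V(J)$ the distances $d_{G_0}(a,u)$ and $d_{G_0}(a,v)$ differ in parity; combined with isometry this shows that a \emph{single} vertex of $J$ distinguishes every edge of every bipartite block, and that $\parallel(J_0:G_0)=\emptyset$. Third, for $w\in V(G_0-J_0)$ and $x$ outside $G_0$ the elementary identity $d_H(w,x)=\min_{a\in V(J)}\big(d_{G_0}(w,a)+d_H(a,x)\big)$ holds; consequently, if $d_H(a,x)=d_H(a,y)$ for every $a\in V(J)$ then $d_H(w,x)=d_H(w,y)$, i.e. a vertex interior to a bipartite block distinguishes a pair lying outside that block only when $V(J)$ already does.

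For the upper bound $\dim_l(H_{\cal G})\le\dim_l(H_{\cal H})$ I would begin with a local metric basis $B$ of $H_{\cal H}$ and show it already resolves $H_{\cal G}$. The edges of $H_{\cal G}$ are the edges of $H_{\cal H}$, which $B$ resolves by the first fact, together with the edges interior to the bipartite blocks. If $B\cap V(J)\neq\emptyset$ the latter are all handled by the second fact and we are done. The delicate case is $B\cap V(J)=\emptyset$: here I would argue that $B$ may be taken to meet $V(J)$ without enlarging it, exchanging one suitably chosen vertex of $B$ for a vertex of $J$, using that $H_{\cal H}$ is non-bipartite and hence $\dim_l(H_{\cal H})\ge 2$ by Theorem \ref{okamoto bip} to guarantee room for the exchange.

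For the lower bound $\dim_l(H_{\cal H})\le\dim_l(H_{\cal G})$ I would take a local metric basis $B$ of $H_{\cal G}$ and build from it a local metric set of $H_{\cal H}$ of no larger size. Vertices of $B$ in $V(H_{\cal H})$ are kept and keep resolving the same edges by the first fact; the difficulty is the vertices of $B$ interior to the bipartite blocks. By the third fact each edge of $H_{\cal H}$ that such a $w$ distinguishes is distinguished by some vertex of $V(J)$, but different edges served by the same $w$ could in principle demand different vertices of $J$, so a naive vertex-by-vertex substitution need not preserve cardinality; this is the main obstacle. The route around it is that, again by the third fact, beyond any single attachment vertex the function $d_H(w,\cdot)$ agrees up to an additive constant with $d_H(a,\cdot)$ for the corresponding $a\in V(J)$, so $w$ resolves only what those $J$-vertices resolve, while the genuinely local obligations — the parallel edges, which by Lemma \ref{adonis} must be met inside their own non-bipartite block — are never delegated to $w$. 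Turning this into a size-preserving exchange of each bipartite-interior vertex of $B$ for a vertex of $J$ is where I expect the real work to lie, and I would organise it as a covering argument block by block rather than as a single projection. As a consistency check, both quantities share the common lower bound $\sum_{G_i\in{\cal H}}|T_i|+|S|$ of Theorem \ref{inferior} (the bipartite blocks contributing empty traversals), which pins them together and guides the exchange.
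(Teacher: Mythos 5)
Your three preliminary facts are correct (and fact~1 and fact~2 are indeed how the paper would set this up: the paper itself gives \emph{no} proof of Corollary~\ref{las bipartitas no cuentan}, leaving it as a consequence of Lemma~\ref{isometricity}, the parity argument behind Corollary~\ref{bipartitas mas bipartita}, and Lemma~\ref{union de generadores}, so there is no written proof to compare against). But your attempt is not a proof: both directions stop exactly at their critical step. For the upper bound, the case $B\cap V(J)=\emptyset$ is not a removable nuisance, and ``exchange one suitably chosen vertex of $B$ for a vertex of $J$'' is an unproven claim for which $\dim_l(H_{\cal H})\ge 2$ provides no mechanism. That this case genuinely occurs and genuinely breaks the naive extension can be seen concretely: let $G_1$ have vertices $a_1,a_2,c,c',b_1,b_2$ and edges $a_1c,\,a_1c',\,a_2c,\,a_2c',\,b_1a_2,\,b_1c,\,b_2a_2,\,b_2c'$ (two triangles through $a_2$), let $G_0\cong P_3$ be the path $a_1$--$u$--$a_2$, and amalgamate over $J$ on $\{a_1,a_2\}$ (an isometric family, since $d_{G_0}(a_1,a_2)=d_{G_1}(a_1,a_2)=2$; note the paper freely uses disconnected $J\cong\overline{K_m}$ under its isometricity notion, cf.\ Example~\ref{dim 2}). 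Then $B=\{b_1,b_2\}$ is a local metric basis of $H_{\cal H}=G_1$, yet $d_H(b_i,u)=\min\bigl(d(b_i,a_1),d(b_i,a_2)\bigr)+1=2=d_H(b_i,a_1)$ for both $i$, so $B$ fails the edge $ua_1$ of $H_{\cal G}$; the corollary survives only because a \emph{different} basis, e.g.\ $\{a_1,b_1\}$, resolves everything. So the exchange step is unavoidable, nontrivial, and entirely missing from your argument.

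For the lower bound the situation is the same: you correctly identify the obstacle (a bipartite-interior vertex $w$ may serve different edges of $H_{\cal H}$ through different vertices of $J$, so vertex-by-vertex substitution need not preserve cardinality), but your proposed way around it is wrong. Since $d_H(w,x)=\min_{a\in V(J)}\bigl(d_{G_0}(w,a)+d_H(a,x)\bigr)$, the claim that $d_H(w,\cdot)$ ``agrees up to an additive constant with $d_H(a,\cdot)$ for the corresponding $a$'' holds only when a single $a$ is the minimizer for all relevant $x$; when the minimizer varies, the set of edges $w$ distinguishes is contained in the \emph{union} over $a\in V(J)$ of what each $a$ distinguishes, but in no single one — which is precisely the multiplicity problem you had just named, so your ``route around it'' does not route around it, and the promised block-by-block covering argument is never given. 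The closing consistency check is also inert: both dimensions exceeding the common lower bound $\sum|T_i|+|S|$ of Theorem~\ref{inferior} does not ``pin them together.'' In short: correct skeleton (isometric embedding of $H_{\cal H}$ in $H_{\cal G}$, parity in bipartite blocks, $\parallel(J_0:G_0)=\emptyset$, the min-formula), but each inequality ends in a declared intention rather than an argument, and in one place in a false assertion; a complete proof would need a genuine exchange/projection lemma in the spirit of Lemma~\ref{suficiente para suma de dimensiones}, pulling basis vertices back to $V(J)$ along geodesics while controlling cardinality.
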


\begin{example}\label{cycles} By the afore-mentioned corollary, if $\{(C_i|J_i)\}$ is isometric, where not all $C_i$'s are of even order, then $\dim_l(\amalg\{(C_i|J_i)\})= 2$.
\end{example}

The importance of isometric embedding in determining local metric dimension of subgraph-amalgamation is illustrated in the following lemma.
\begin{lemma}\label{union de generadores}
Let $\{(G_i|J_i)\}$ be isometric. If $A_i\subseteq V(G_i)$ is a local metric set for $G_i$ then $\bigcup A_i$ is a local metric set for $H$.
 \end{lemma}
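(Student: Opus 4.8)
The plan is to verify the defining property of a local metric set for $H$ one edge at a time, reducing each instance to the corresponding $G_i$ and then transporting the distance inequality back to $H$ via isometry. First I would fix an arbitrary edge $uv\in E(H)$ and locate an index $i$ for which this edge originates in $G_i$. By the edge-set decomposition $E(H)=\bigcup E(G_i-J_i)\cup E(J)\cup\{u^i_jv_k: u^i_jv^i_k\in E(G_i)\}$, every edge of $H$ either lies in some $E(G_i-J_i)$, lies in $E(J)$, or has the form $u^i_jv_k$ coming from an edge $u^i_jv^i_k\in E(G_i)$. In each case, after the identification of $V(J)$ with $V(J_i)$, the pair $uv$ is an edge of $G_i$ for at least one index $i$ (for an edge of $J$ any index will do). I would record that the two endpoints $u,v$ then sit in $V(G_i)$ under this identification.

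Second, I would invoke the hypothesis on the operand graphs: since $A_i$ is a local metric set for $G_i$ and $uv\in E(G_i)$, there is a vertex $w\in A_i$ with $d_{G_i}(w,u)\neq d_{G_i}(w,v)$. The final step is to lift this to $H$. Because $\{(G_i|J_i)\}$ is isometric, Lemma~\ref{isometricity} guarantees that $G_i$ is isometrically embedded in $H$; as $w,u,v$ all lie in $V(G_i)$, this yields $d_H(w,u)=d_{G_i}(w,u)$ and $d_H(w,v)=d_{G_i}(w,v)$, and therefore $d_H(w,u)\neq d_H(w,v)$. Hence $w\in A_i\subseteq\bigcup A_i$ distinguishes $u$ and $v$ in $H$. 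Since the edge $uv$ was arbitrary, $\bigcup A_i$ distinguishes $E(H)$ and is a local metric set for $H$.

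The only point requiring genuine care, and the place where the isometric hypothesis is indispensable, is the lifting step. Without isometric embedding a shortest $w$--$u$ path in $H$ could leave $G_i$, travel through $J$ into some other copy $G_j$, and re-enter, so that $d_H(w,u)$ drops strictly below $d_{G_i}(w,u)$; such a collapse could equalize the two distances and destroy the distinguishing property that holds inside $G_i$. The isometric assumption, via Lemma~\ref{isometricity}, is precisely what forbids these shortcuts and lets the inequality $d_{G_i}(w,u)\neq d_{G_i}(w,v)$ survive intact in $H$. Everything else is the routine bookkeeping of the edge-set decomposition described above.
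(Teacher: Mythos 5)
Your proposal is correct and follows essentially the same route as the paper's proof: pick an arbitrary edge $uv\in E(H)$, place it inside some $G_i$, use the local metric set $A_i$ to find a distinguishing vertex there, and transfer the inequality to $H$ via the isometric embedding of Lemma~\ref{isometricity}. Your write-up merely makes explicit two steps the paper leaves implicit (the edge-set decomposition locating $i$, and the appeal to Lemma~\ref{isometricity} justifying $d_H=d_{G_i}$), which is sound bookkeeping rather than a different argument.
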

\begin{proof} Let $uv$ be an edge in $H$ and $u,v \in V(G_i)$. As $A_i$ is a local metric set for $G_i$, there exists $b\in A_i$
such that $d_H(b,u) = d_{G_i}(b,u) \ne d_{G_i}(b,v) = d_H(b,v)$. And the result follows.
\end{proof}

Example \ref{1+1=2} shows that isometricity is sufficient for the union of local metric sets of the amalgamated graphs to be a local metric set for the subgaph-amalgamation. Now we are ready to present upper bounds for subgraph-amalgamation with isometric embedings.

\begin{theorem}\label{cota sup}
If $\{(G_i|J_i)\}$ is isometric and $T_i$ is a traversal for $\parallel(J_i:G_i)$ then $$\dim_l(H) \le \min\{\sum \dim_l(G_i), \sum |T_i| + |J|\}$$ and the bound is tight.
\end{theorem}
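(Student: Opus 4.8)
The plan is to establish the two bounds separately, since the minimum of two quantities is at most each of them, and then to exhibit examples realizing the bound. For the first term, $\sum \dim_l(G_i)$, I would invoke Lemma \ref{union de generadores} directly: for each $G_i$ choose a local metric basis $A_i$, so that $|A_i| = \dim_l(G_i)$. Because $\{(G_i|J_i)\}$ is isometric, that lemma guarantees $\bigcup A_i$ is a local metric set for $H$, and hence $\dim_l(H) \le \sum |A_i| = \sum \dim_l(G_i)$. This half is essentially immediate from the machinery already developed.

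For the second term, $\sum |T_i| + |J|$, I would construct an explicit local metric set of this size. First take, for each $i$, a traversal $T_i$ for $\parallel(J_i:G_i)$, and additionally take all of $V(J)$ into the set; call the union $S$, so $|S| \le \sum|T_i| + |J|$. I must then verify that $S$ distinguishes every edge of $H$. I would partition the edges of $H$ into three types and handle each. An edge $uv$ lying inside some $G_i - J_i$ with $uv \in \parallel(J_i:G_i)$ is distinguished by its traversal $T_i$ by definition. An edge $uv$ inside $G_i - J_i$ that is \emph{not} parallel to $J_i$ is, by definition of $\parallel$, distinguished by some vertex $w \in V(J)$, which is in $S$; here isometric embedding ensures the distances computed in $H$ agree with those in $G_i$, so the distinguishing survives in $H$. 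Finally, an edge $uv$ with at least one endpoint in $V(J)$ is distinguished by its other endpoint's neighbors in $V(J)$, or more carefully by a vertex of $J$ itself, again using that $V(J) \subseteq S$.

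The step I expect to be the main obstacle is the careful distance bookkeeping for edges not covered by the traversals, specifically confirming that a vertex $w \in V(J)$ which distinguishes an edge inside $G_i$ continues to distinguish it once we pass to $H$. The guarantee here is precisely isometric embedding: by Lemma \ref{isometricity} and the hypothesis, $d_H(w,x) = d_{G_i}(w,x)$ for all $w,x \in V(G_i)$, so no shortcut through another $G_j$ can equalize two previously unequal distances. One must also be attentive to edges incident to $J$ whose endpoint lies outside $J$ in some $G_i$; these are handled by noting that a vertex of $V(J)$ adjacent to exactly one endpoint resolves the edge, and such a vertex exists because the edge is an edge of the induced subgraph $G_i$ and distances to $J$ cannot coincide for both endpoints of an edge incident to $J$. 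Once every edge class is dispatched, $S$ is a local metric set and the upper bound follows.

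For tightness I would point back to the examples already in the paper: Example \ref{1+1=2} (where $\dim_l(C_5) = 2 = \sum \dim_l(G_i)$ for the two paths) realizes the first term, while the prism-and-wheel construction of Example \ref{prismas}, together with the amalgamation-of-complete-graphs family used in Theorem \ref{cruda}, realizes the second term $\sum|T_i| + |J|$. Exhibiting one isometric instance attaining each side of the minimum suffices to show the overall bound cannot be improved.
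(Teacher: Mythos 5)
Your derivation of the inequality itself is correct and is essentially the paper's argument. The first term follows from Lemma \ref{union de generadores} applied to bases $A_i$, exactly as in the paper. For the second term, the paper observes that $T_i \cup V(J_i)$ is a local metric set for each $G_i$ and then invokes Lemma \ref{union de generadores} once more on the set $B = (\bigcup T_i) \cup V(J)$; your edge-by-edge verification in $H$ unrolls the same content (and could be shortened: any edge with an endpoint in $V(J)$ is trivially resolved by that endpoint itself, since $V(J)\subseteq S$ and $0 \ne 1$, so no neighbor-chasing or ``distances to $J$'' argument is needed there). One small point you handled implicitly but correctly: traversals are defined via $d_H$, and under isometricity $d_H$ restricted to $V(G_i)$ coincides with $d_{G_i}$, so the transfer works in both directions.

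The genuine gap is in your tightness claims: none of the three examples you cite actually witnesses tightness under the theorem's hypothesis. Example \ref{1+1=2} is \emph{not} isometric, since $d_{P_3}(u_1,u_3)=2$ while $d_{P_4}(v_1,v_4)=3$ (equivalently, $P_4$ is not isometrically embedded in $C_5$, where the identified vertices lie at distance $2$); the numerical equality $\dim_l(C_5)=2=\sum\dim_l(G_i)$ therefore proves nothing about the theorem, whose hypothesis fails. Example \ref{prismas} attains the \emph{lower} bound $\sum|T_i|+|S|=\lceil n/4\rceil$ of Theorem \ref{inferior}, not the upper-bound term $\sum|T_i|+|J|=0+0+n$; you have conflated the minimal out-solving set $S$ with all of $V(J)$, and the gap between $\lceil n/4\rceil$ and $n$ is unbounded. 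Finally, for $\amalg\{(K_{m_i}|K_r)\}$ from Theorem \ref{cruda}, a traversal of each part is a vertex cover of $K_{m_i-r}$ of size $m_i-r-1$, so $\sum|T_i|+|J|=\sum m_i-n(r+1)+r$, whereas $\dim_l(H)=\sum m_i-n(r+1)+r-1$ (a local metric set needs only $r-1$ vertices of $J$), so that family falls short of the second term by exactly one. The paper closes this gap with purpose-built isometric instances: $G_1\cong G_2\cong \amalg\{(v_0+P_6|v_0),(C_4|u_1)\}$ amalgamated over $C_4$, where $\dim_l(H)=4=\dim_l(G_1)+\dim_l(G_2)$, for the first term; and the complete graph with an odd cycle amalgamated over $J\cong K_2$, giving $\dim_l(H)=m-1=\sum|T_i|+|J|<\sum\dim_l(G_i)$, for the second. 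To repair your proof you must either verify isometricity and exact equality in examples of your own, or simply adopt these two.
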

\begin{proof}
$\dim_l(H) \le \sum \dim_l(G_i)$ is a direct consequence of Lemma \ref{union de generadores}. It is clear that $T_i\cup V(J)$ is a metric set for $G_i$. Now define $B=(\cup T_i) \cup V(J)$. By Lemma \ref{union de generadores}, $B$ is a local metric set for $H$ and the result follows.

We could see that the first part of the bound is tight with the following example. Let $G_1 \cong G_2 \cong \amalg\{(v_0+P_6|v_0),(C_4|u_1)\}$ . Consider the graph $H:=\amalg\{(G_i|C_4)\}$, then $\dim_l(H)= 4 = 2 + 2 = \dim_l(G_1)+ \dim_l(G_2)$.

The tightness of the second part of the bound is shown in the next example. Let $G_1\cong K_n$, $G_2\cong C_{2m+1}$, and $J\cong K_2$. Then $|T_1|= m-3$, $|T_2|=0$, $|J|=2$, and $\dim_l(H) = m-1 = \sum |T_i| + |J| < \dim_l(G_1) + \dim_l(G_2) = m+1$.
\end{proof}

We shall proceed with examining several conditions that ensure that the first bound in Theorem \ref{cota sup} is achieved. The first condition examines the relation between local metric basis of the amalgamated graphs.

\begin{lemma}\label{necesaria para suma de dimensiones}
Let $\{(G_i|J_i)\}$ be isometric. If $\dim_l(H)= \sum \dim_l(G_i)$ and $B_k$ is a local metric basis for $G_k$ then, for every $i \ne j$, $B_i \cap B_j = \emptyset$.
\end{lemma}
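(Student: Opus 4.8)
The plan is to reduce the statement to a one-line counting identity, using Lemma \ref{union de generadores} as the only structural input. First I would invoke that lemma: since $\{(G_i|J_i)\}$ is isometric and each $B_i$ is in particular a local metric set for $G_i$, the union $\bigcup B_i$ is a local metric set for $H$, whence $|\bigcup B_i| \ge \dim_l(H)$.

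Next I would exploit that each $B_k$ is not merely a local metric set but a \emph{basis}, so $|B_k| = \dim_l(G_k)$. Combining this with the hypothesis gives
\[
\sum |B_i| = \sum \dim_l(G_i) = \dim_l(H) \le \Big|\bigcup B_i\Big| \le \sum |B_i|,
\]
where the last inequality is the trivial bound for the cardinality of a union. All the terms therefore coincide, and in particular $|\bigcup B_i| = \sum |B_i|$.

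Finally I would convert this equality into pairwise disjointness. Writing $m(x) := |\{i : x \in B_i\}|$ for the multiplicity of each $x \in \bigcup B_i$, one has $\sum |B_i| = \sum_x m(x)$ and $|\bigcup B_i| = \sum_x 1$; the equality above forces $m(x) = 1$ for every $x$, so no vertex belongs to two distinct $B_i$, i.e. $B_i \cap B_j = \emptyset$ for $i \ne j$. One may also note that such a common vertex would in any case have to lie in $V(J)$, the only part shared by distinct $G_i$ inside $H$, although the counting argument does not need this.

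I do not expect a genuine obstacle here: the argument is essentially forced once Lemma \ref{union de generadores} supplies the lower bound $|\bigcup B_i| \ge \dim_l(H)$. The only step requiring a moment's care is the last one, namely the elementary fact that for finite sets $|\bigcup B_i| = \sum |B_i|$ holds if and only if the $B_i$ are pairwise disjoint. It is worth emphasizing that the hypothesis $\dim_l(H) = \sum \dim_l(G_i)$ is exactly what pins the union size down to the full sum; were it weakened to a strict inequality the conclusion would fail, since then a strictly smaller union would be permitted and overlaps could occur.
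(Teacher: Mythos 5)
Your proposal is correct and takes essentially the same route as the paper: both rest solely on Lemma \ref{union de generadores} together with the elementary fact that $\bigl|\bigcup B_i\bigr| < \sum |B_i|$ whenever two of the sets intersect. The paper phrases it as a contradiction (assume $B_1 \cap B_2 \ne \emptyset$ and derive $\dim_l(H) \le \bigl|\bigcup B_i\bigr| < \sum \dim_l(G_i)$), while you run the same inequality chain directly and extract disjointness from the forced equality $\bigl|\bigcup B_i\bigr| = \sum |B_i|$ via the multiplicity count.
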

\begin{proof}
Suppose that there exist local metric basis $B_1$ and $B_2$ of $G_1$ and $G_2$ such that $B_1 \cap B_2 \ne \emptyset$. By Lemma \ref{union de generadores}, $\dim_l(H) \le |\cup B_i| = |(B_1 \cup B_2) \cup_{i>2} B_i| < \sum |B_i| = \sum \dim_l(G_i)$.
\end{proof}

However this lemma does not characterize the graphs whose subgraph-amalgamation's local dimension equals the sum
of their dimensions, as we could see in the following example.
\begin{example}
Let $G$ be a graph obtained from $K_4$, where $V(K_4)=\{u_i\}$, with two new vertices $v$ and $w$ with neighborhoods $N(v)=\{u_1,u_2\}$ and $N(w)=\{u_2,u_3\}$. Consider $G_i \cong G$ and $J\cong K_4$, which is isometrically embedded in $H$. The only local basis of $G_i$ is $B_i=\{v^i,w^i\}$, and clearly, for $i \ne j$, $B_i \cap B_j = \emptyset$. However, $\dim_l(\amalg\{(G_i|J_i)\})=2$.
\end{example}

The second condition deals with a relation between a local metric basis of an amalgamated graphs with an isometric embedding $J$.

\begin{lemma}\label{suficiente para suma de dimensiones}
Let $\{(G_i|J_i)\}$ be isometric. If for every local metric basis $B_i$ of $G_i$, $B_i \cap V(J_i) = \emptyset$ then $\dim_l(H) = \sum \dim_l(G_i)$.
\end{lemma}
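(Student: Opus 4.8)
The plan is to bracket $\dim_l(H)$ from both sides. The upper bound $\dim_l(H)\le\sum\dim_l(G_i)$ is already in hand: it is the left entry of the minimum in Theorem \ref{cota sup}, which comes from Lemma \ref{union de generadores} applied to a union of bases, and under the present hypothesis these bases sit in the pairwise disjoint sets $V(G_i-J_i)$, so their union has size exactly $\sum\dim_l(G_i)$. Everything therefore reduces to the matching lower bound. I would fix an arbitrary local metric basis $W$ of $H$ and decompose it as $A_i:=W\cap V(G_i-J_i)$ and $D:=W\cap V(J)$, so that $|W|=|D|+\sum_i|A_i|$ and the blocks $A_i$ are pairwise disjoint. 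The whole game is to show $|A_i|\ge\dim_l(G_i)$ for every $i$.

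First I would verify that $A_i\cup V(J_i)$ is a local metric set for $G_i$. Take an edge $uv\in E(G_i)$; since $W$ resolves it in $H$, pick $w\in W$ with $d_H(w,u)\ne d_H(w,v)$. If $w\in V(G_i)$ then by isometry $d_{G_i}(w,u)=d_H(w,u)\ne d_H(w,v)=d_{G_i}(w,v)$, so $w$ (which lies in $A_i\cup V(J_i)$) resolves $uv$ inside $G_i$. If instead $w\in V(G_j-J_j)$ with $j\ne i$, then every shortest $w$--$x$ path leaves $G_i$ through $V(J)$, so $d_H(w,x)=\min_{t\in V(J)}\bigl(d_H(w,t)+d_{G_i}(t,x)\bigr)$ for $x\in\{u,v\}$; the two minima being unequal forces some $t\in V(J_i)$ with $d_{G_i}(t,u)\ne d_{G_i}(t,v)$, and this $t\in A_i\cup V(J_i)$ resolves $uv$. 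For the parallel edges of $G_i$ this is even cleaner: Lemma \ref{adonis} confines their resolvers to $V(G_i-J_i)$, so the resolving vertex of $W$ actually lies in $A_i$.

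The decisive step is to delete the shared block $V(J_i)$ from this resolving set, i.e.\ to upgrade the above to ``$A_i$ itself is a local metric set for $G_i$''. Once this is secured, the hypothesis gives $\dim_l(G_i)=\min\{|S|:S\subseteq V(G_i-J_i)\text{ resolves }G_i\}$, since a $J$-avoiding basis exists; hence $|A_i|\ge\dim_l(G_i)$, and disjointness of the $A_i$ yields $|W|\ge\sum_i|A_i|\ge\sum_i\dim_l(G_i)$, which closes the argument. To perform the deletion I would argue that a vertex $t\in V(J_i)$ can be indispensable in $A_i\cup V(J_i)$ only for edges that are \emph{visible} from $J_i$ (those resolved by some $J_i$-vertex), and that the hypothesis ``no basis of $G_i$ meets $V(J_i)$'' is exactly what forbids such a $t$ from being irreplaceable: if some edge of $G_i$ were resolved by $W$ only through $V(J)$ or, via the $\min$-over-$V(J)$ identity above, only through another copy, one would substitute the $J$-part of $W$ by the corresponding part of a $J$-avoiding basis of $G_i$ without enlarging $W$, contradicting minimality of $W$.

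I expect this last exchange to be the main obstacle. The real danger is the phenomenon by which a resolver placed in one copy can ``see'' the edges of another copy through the common $J$ and thereby do double duty, letting a single small set resolve several $G_i$ at once; the entire force of the hypothesis has to be spent ruling this out, by showing that because $V(J_i)$ never enters an optimal resolving set of $G_i$, no such cross-copy sharing can drive the global count below $\sum_i\dim_l(G_i)$. Making the substitution simultaneous over all $i$ — checking that re-internalising the role of every deleted $J$-vertex inside its various host graphs can be carried out without conflict — is the crux, and is where the bulk of the work will lie.
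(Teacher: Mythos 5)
Your opening moves are sound and coincide with the paper's mechanics: the upper bound via Lemma \ref{union de generadores}, and the observation that a resolver $w\in V(G_j-J_j)$, $j\ne i$, forces a vertex $t\in V(J_1)$ with $d_{G_i}(t,u)\ne d_{G_i}(t,v)$ (your identity $d_H(w,x)=\min_{t\in V(J)}\bigl(d_H(w,t)+d_{G_i}(t,x)\bigr)$ is exactly the shortest-path-through-$J$ argument the paper runs inside its proof). The genuine gap is your ``decisive step,'' and it is not merely unfinished --- the target you set for it overshoots. You want to show that $A_i=W\cap V(G_i-J_i)$ \emph{alone} resolves $E(G_i)$ for every local metric basis $W$ of $H$. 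Combined with $|W|=|D|+\sum_i|A_i|$, that would force $D=W\cap V(J)=\emptyset$, i.e.\ you would be proving that \emph{no} basis of $H$ contains any $J$-vertex --- a statement strictly stronger than the lemma, and one the hypothesis does not directly supply: the hypothesis forbids $J_i$-vertices in bases of the individual $G_i$, not in bases of $H$. Indeed, if some basis $W$ of $H$ did contain a $J$-vertex, then (granting the lemma's conclusion) $\sum_i|A_i|<\sum_i\dim_l(G_i)$, so some $A_i$ would be too small to resolve $E(G_i)$, and your claim would fail for that $W$. Your proposed repair --- ``substitute the $J$-part of $W$ by the corresponding part of a $J$-avoiding basis of $G_i$ without enlarging $W$, contradicting minimality of $W$'' --- contradicts nothing: minimality of $W$ is violated only by a \emph{strictly smaller} local metric set, and an exchange of equal size yields no contradiction. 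You flag this step yourself as ``where the bulk of the work will lie,'' so the proposal is incomplete precisely at the point where the hypothesis must be spent.

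The paper spends the hypothesis differently: on a cardinality count, not on deleting $J$-vertices. Assume $\dim_l(H)<\sum\dim_l(G_i)$, take a basis $B$ of $H$, set $B_i=B\cap V(G_i-J_i)$, and pick an index with $|B_1|<\dim_l(G_1)$. An edge $uv\in E(G_1)$ unresolved by $B_1$ is resolved by some $b\in B-B_1$, and the shortest-path argument (your min-over-$V(J)$ fact) produces $b_1\in V(J_1)$ resolving $uv$; one \emph{adjoins} $b_1$ to $B_1$ and repeats. The moment the augmented set $B_1\cup\{b_1,\dots,b_k\}$ resolves $E(G_1)$ while $|B_1|+k\le\dim_l(G_1)$, it must have cardinality exactly $\dim_l(G_1)$, hence it is a local metric \emph{basis} of $G_1$ that meets $V(J_1)$ --- the contradiction with the hypothesis. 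This is the missing idea in your write-up: a small resolving set of $G_1$ that contains $J$-vertices gets promoted to a forbidden basis by a size comparison, rather than having its $J$-vertices removed. If you rewrite your final step as this augment-and-count argument (the paper itself is admittedly terse about the iteration when several $b_j$'s are needed), the proof goes through; trying to prove that $A_i$ resolves $G_i$ outright will not.
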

\begin{proof}
Assume, for a contradiction, that $dim_l(H) < \sum \dim_l(G_i)$. Let $B$ be a local metric basis for $H$ and $B_i=B \cap V(G_i-J_i)$. Thus it is necessary for one of the $B_i$s, say $B_1$, $|B_1| < \dim_l(G_1)$ holds. So there exist $uv \in E(G_1)$ not distinguished by $B_1$. As $B$ is a local metric basis, there exist $b\in B - B_1$ such that, without lost of generality, $d(b,u) = d(b,v)-1$. Let $P_{bu}$ a minimal path between $b$ and $u$ and $b_1 \in P_{bu} \cap V(J)$. $b_1$ distinguishes $uv$ because otherwise $d(b,v) \le d(b,u_1) + d(u_1,v) = d(b,u) + d(u_1,u) = d(b,u)$, a contradiction. If $B_1 \cup \{b_1\}$ resolves $E(G_1)$ we obtain a contradiction because there exists a local metric basis for $G_1$ that intersects $V(J)$; otherwise we could repeat the procedure until a contradiction is obtained, and the result follows.
\end{proof}

The sufficient condition in Lemma \ref{suficiente para suma de dimensiones} is not necessary as shown in the next example.

\begin{example} Let $G_1 \cong G_2 \cong F_{1,9} \cong v + P_9$, $J \cong F_{1,3}$, where $V(G_i)=\{v,u^i_j\}$, $V(J_1)=(v^1_0,u^1_7,u^1_8,u^1_9)$, and $V(J_2)=(v^2_0,u^2_1,u^2_2,u^2_3)$. Then $H \cong F_{1,15}$ and $\dim_l(H) = 4 = \sum \dim_l(G_i)$. However each $G_i$ has only one local metric basis, namely, $B_i=\{u^i_3,u^i_7\}$, and so $B_1 \cap J = \{u^1_7\}$ and $B_2 \cap J = \{u^2_3\}$.
\end{example}

The last condition based on connection between local metric basis and traversal sets.

\begin{lemma}\label{Bi contained in Ti} If every traversal $T_i$ for $\parallel(J_i:G_i)$ contains a local metric basis for $G_i$, then $\dim_l(H) = \sum \dim_l(G_i)$.
\end{lemma}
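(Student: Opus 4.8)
The plan is to sandwich $\dim_l(H)$ between two matching bounds, each equal to $\sum \dim_l(G_i)$, using the hypothesis only to pin down the size of each traversal. Throughout I assume, as elsewhere in this section, that $\{(G_i|J_i)\}$ is isometric.

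The crucial observation is that the hypothesis forces $|T_i| = \dim_l(G_i)$ for every $i$. I would fix a traversal $T_i$ together with a local metric basis $B_i \subseteq T_i$, whose existence the hypothesis guarantees; then $\dim_l(G_i) = |B_i| \le |T_i|$ is immediate from $B_i \subseteq T_i$. For the reverse inequality I would argue that $B_i$ is itself an admissible traversal: since $B_i \subseteq T_i \subseteq V(G_i - J_i)$ and $B_i$ distinguishes every edge of $G_i$, in particular every edge of $\parallel(J_i:G_i)$, isometricity --- through the identity $d_H(a,x) = d_{G_i}(a,x)$ for $a,x \in V(G_i)$ --- transfers this distinguishing from $d_{G_i}$ to $d_H$. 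Hence $B_i \subseteq V(G_i - J_i)$ distinguishes $\parallel(J_i:G_i)$, so the minimality of $T_i$ yields $|T_i| \le |B_i| = \dim_l(G_i)$. Combining the two inequalities gives $|T_i| = \dim_l(G_i)$, and in fact $T_i = B_i$.

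With this equality in hand, the upper bound $\dim_l(H) \le \sum \dim_l(G_i)$ is supplied directly by Theorem \ref{cota sup}; equivalently, since $T_i = B_i$ is a local metric set for $G_i$, Lemma \ref{union de generadores} shows $\bigcup T_i$ is a local metric set for $H$ and gives the same estimate. For the matching lower bound I would take any local metric set $C$ for $H$ and restrict it to each factor. By Lemma \ref{adonis}, any vertex distinguishing an edge of $\parallel(J_i:G_i)$ in $H$ lies in $V(G_i - J_i)$, so the vertices of $C$ that distinguish these edges all lie in $C \cap V(G_i - J_i)$, which therefore distinguishes $\parallel(J_i:G_i)$; minimality of $T_i$ then gives $|C \cap V(G_i - J_i)| \ge |T_i| = \dim_l(G_i)$. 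As the sets $V(G_i - J_i)$ are pairwise disjoint, summing yields $|C| \ge \sum \dim_l(G_i)$ (the traversal contribution already present in Theorem \ref{inferior}). Together the two bounds give $\dim_l(H) = \sum \dim_l(G_i)$.

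The step I expect to be the main obstacle is the reverse inequality $|T_i| \le \dim_l(G_i)$, that is, verifying that the local metric basis $B_i$ contained in $T_i$ genuinely qualifies as a traversal. This is precisely where isometricity is indispensable: a local metric basis distinguishes edges with respect to $d_{G_i}$, whereas a traversal must distinguish the parallel edges with respect to $d_H$, and only the identity $d_H = d_{G_i}$ on $V(G_i)$ bridges the two notions. Once this is secured, the remainder is the bookkeeping of subset cardinalities over the pairwise disjoint sets $V(G_i - J_i)$ together with the two standing bounds.
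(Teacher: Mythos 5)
Your proof is correct and takes essentially the same route as the paper's (very terse) proof: the lower bound of Theorem \ref{inferior} combined with the hypothesis that every traversal contains a basis, matched against the upper bound $\dim_l(H)\le\sum\dim_l(G_i)$ from Theorem \ref{cota sup} (equivalently Lemma \ref{union de generadores}), with isometricity correctly supplied as the standing assumption of the section. Your additional verification that $|T_i|=\dim_l(G_i)$ exactly (the direction $|T_i|\le |B_i|$, using isometricity to make $B_i$ an admissible traversal) is sound but not needed: the containment $B_i\subseteq T_i$, giving $|T_i|\ge\dim_l(G_i)$, already closes the sandwich.
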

\begin{proof}
Recall that every local metric set contains a traversal (Theorem \ref{inferior}) and the result follows.
\end{proof}

We could construct an infinite family of graphs that fulfill the conditions in Lemma \ref{Bi contained in Ti}, as we show in the
following. We recall that $\chi(G)$ denotes the \emph{chromatic number} of $G$.
\begin{lemma}\label{ex Bi contained in Ti}
Let $J$ be an arbitrary graph and $m \ge \chi(J)$.
Then there exists a graph $G$, such that
\begin{enumerate}
\item $J$ is an induced subgraph of $G$,
\item $\dim_l(G) = m $, and
\item  There exists $B$ a local metric basis for $G$ such that $B\cap V(J)=\emptyset$.
\end{enumerate}
\end{lemma}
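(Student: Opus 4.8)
The plan is to build $G$ directly from a proper colouring of $J$, attaching one ``twin gadget'' per colour: the colouring will control the upper bound, and the twins will force the lower bound. Since $m\ge\chi(J)$, I would fix a proper colouring $c\colon V(J)\to\{1,\dots,m\}$. For each colour $i$ I introduce two new vertices $a_i,a_i'$, make them adjacent to each other, and join each of them to exactly those $u\in V(J)$ with $c(u)\ne i$ (so $a_i\sim u\iff a_i'\sim u\iff c(u)\ne i$); I add no edge inside $V(J)$ and no edge between vertices of different gadgets. Since no edge is added within $V(J)$, $J$ is an induced subgraph of $G$, which is item (1). When $|V(J)|\ge 2$ one can choose $c$ to use at least two colours, so every gadget is joined to some vertex of $J$; and for $m\ge 3$ any two vertices $u,v\in V(J)$ have a common gadget neighbour $a_j$ with $j\notin\{c(u),c(v)\}$, so $G$ is connected.

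For the lower bound, the key observation is that $N[a_i]=N[a_i']$, i.e. $a_i$ and $a_i'$ are true twins and hence equidistant from every other vertex. Consequently the edge $a_ia_i'$ is resolved only by $a_i$ or $a_i'$ themselves, so any local metric set must meet each of the $m$ pairwise-disjoint pairs $\{a_i,a_i'\}$. Therefore $\dim_l(G)\ge m$.

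For the upper bound I would show that $B=\{a_1,\dots,a_m\}$ is a local metric set, which simultaneously gives item (3) because $B\cap V(J)=\emptyset$. Every edge incident with some $a_i$ is resolved by that endpoint. A $J$-edge $uv$ with $c(u)=p$ is resolved by $a_p$: since $c(u)=p$ we have $a_p\not\sim u$, while $a_p\sim v$ because $c(v)\ne p$, so $d(a_p,v)=1$ and $d(a_p,u)=2$. The only remaining edges are those of the form $a_i'u$ with $c(u)=q\ne i$; such an edge is resolved by any $a_j$ with $j\notin\{i,q\}$, since then $a_j\sim u$ forces $d(a_j,u)=1$ whereas $a_j\not\sim a_i'$ forces $d(a_j,a_i')\ge 2$. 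Hence $\dim_l(G)\le m$, and with the preceding paragraph $\dim_l(G)=m$, establishing (2) and (3).

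The heart of the argument is the twin lower bound, which is clean and robust; the delicate points all lie on the side of the upper bound and of connectivity, and that is where I expect the real work. The resolution of the edges $a_i'u$ above uses $m\ge 3$ essentially (for $m=2$ there is no colour $j\notin\{i,q\}$, and one must instead compute the relevant distances, which can genuinely collide), and connectivity can fail when some colour class is all of $V(J)$ — notably when $J\cong K_1$. I would therefore dispose of the finitely many degenerate cases ($J\cong K_1$, and $m\in\{1,2\}$) by separate explicit constructions: a star $K_{1,t}$ for $m=1$; the friendship graph $F_m$ (that is, $m$ triangles sharing a common vertex), which has $\dim_l=m$ and a basis avoiding its centre, for $J\cong K_1$; and a direct small graph for $m=2$. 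Finally, one should verify that the gadgets create no \emph{unintended} true twins, so that $\dim_l(G)$ is not pushed above $m$; this holds because gadgets of different colours have distinct neighbourhoods, and the explicit set $B$ already witnesses $\dim_l(G)\le m$.
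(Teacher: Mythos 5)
Your main construction is correct for $m\ge 3$ and takes a genuinely different route from the paper's. The paper attaches $m$ disjoint edges $x_iy_i$ to the colour classes of $J$ (by edges to the class $U_j$ with $j\equiv i \pmod{\chi(J)}$, and by disjoint paths of length $2$ otherwise), so that $X=\{x_iy_i\}$ lands in $\parallel(J:G)$ and the lower bound is inherited from the traversal machinery of Theorem \ref{inferior}; it splits off $\chi(J)=1$ separately, using $G\cong J+K_1$ for $m=1$ and $G\cong J+P_{4m+1}$ with basis $\{u_{4i-1}\}_{i=1}^{m}$ for $m\ge 2$. Your true-twin pairs $\{a_i,a_i'\}$ give a cleaner, self-contained lower bound (each edge $a_ia_i'$ is resolved only by its own endpoints, and the $m$ pairs are disjoint), and your verification that $B=\{a_1,\dots,a_m\}$ resolves every edge is correct whenever $m\ge 3$ and the colouring uses at least two colours.

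There is, however, a genuine gap at $m=2$: this is not one of ``finitely many degenerate cases.'' When $m=2$, the graph $J$ ranges over \emph{all} graphs with $\chi(J)\le 2$, i.e.\ all bipartite and all edgeless graphs, and since $G$ must contain $J$ as an induced subgraph, no single ``direct small graph'' can dispose of this case; a general construction is needed and you do not supply one. (Your $m=1$ star does cover all edgeless $J$, and the friendship graph covers $J\cong K_1$, so those cases are genuinely fine.) The situation at $m=2$ in fact splits in two. If $\chi(J)=2$, your main construction survives, but only after the distance computation you declined to do: for an edge $a_1'u$ with $c(u)=2$, one has $N(a_2)\cap N(a_1')=\emptyset$, hence $d(a_2,a_1')=3$ (using any edge of $J$), while $d(a_2,u)=2$ if $u$ has a $J$-neighbour and $d(a_2,u)=4$ otherwise; so the suspect edges are resolved and no collision occurs. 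If instead $J$ is edgeless with $|V(J)|\ge 2$, the construction is genuinely broken for $m=2$: gadget $i$ attaches only to vertices coloured $\ne i$, and with two colours and $E(J)=\emptyset$ the graph is disconnected whichever colouring you choose (with both colours used, the part $\{a_1,a_1'\}\cup c^{-1}(2)$ never meets $\{a_2,a_2'\}\cup c^{-1}(1)$; with one colour, that gadget is isolated). The repair is exactly the paper's Case I: for $\chi(J)=1$ and $m\ge 2$ take $G\cong J+P_{4m+1}$ with the basis inside the path. With that case added and the $m=2$, $\chi(J)=2$ distances checked, your proof is complete.
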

\begin{proof}
We shall consider two separate cases.

\textbf{Case I:} $\chi(J)=1$. In this case $J\cong\overline{K_n}$. For $m=1$ take $G \cong J+K_1$ and $B=V(K_1)$.
For $m \ge 2$  we take $G \cong J + P_{4m+1}$. If $V(P_{4m+1})=\{u_i\}_{i=1}^{4m-1}$ then $B=\{u_{4i-1}\}_{i=1}^{m}$.

\textbf{Case II:} $\chi(J)\ge 2$. Let $\{U_i\}$ be the color classes of $J$ and $X:=\{x_iy_i\}$ be a set of $m$ new edges. Construct $G$ as the graph resulting from connecting $x_i$ and $y_i$ to a vertex $u\in U_j$ with edges $x_iu$ and $y_iu$ if $i \equiv j\; mod \;\chi(J)$ or with disjoint paths $P_{x_iu}$ and $P_{y_iu}$ of length $2$ otherwise. Then $J$ is an induced sugraph of $G$, $\parallel(J:G) = X$, and $T=\{x_i\}$ is a traversal for $\parallel(J:G)$. Clearly, $|T|=|X|=m$. We shall show that $T$ is also a local metric basis for $G$. First we observe that $T$ distinguishes $X$. Since edges in $X$ are disjoint and no vertex in $V(J)$ distinguishes an edge in $X$, then there is no set $S$ with $|S|<|T|$ could distinguish edges in $X$. Now consider $uv\in E(G) - X$. If $u=x_i$ then $x_i\in T$  distinguishes $u$ from $v$. If $u=y_i$ and $v\in U_j$ with $j\equiv i\; mod\; \chi(J)$, then for $x_k\in T$, $k\not\equiv j\; mod\; \chi(J)$, we have $d(x_k,v)= 2 \ne 3 = d(x_k,u)$. If  $u=y_i$ and $v\in P_{y_iw}$, where $w\in U_j$, then for $x_k\in T$, $k\equiv j\; mod\; \chi(J)$, we have $d(x_k,v)= 2 \ne 3 = d(x_k,u)$. If $uv \in E(J)$, say without lost of generality, $u\in U_i$ and $v\in U_j$, consider $x_k\in T$, $k\equiv i\; mod\; \chi(J)$. And so we have $d(x_k,u) = 1 \ne 2 = d(x_k,v)$. So $T$ is a local metric basis and the result follows.
\end{proof}

\subsection{A Better Upper Bound Using Co-traversals}

In spite of the tightness of the bound in Theorem \ref{cota sup}, we could find examples whose local metric dimensions are still much smaller than the bounds, as we can see in the following.

\begin{example}\label{la suma de dimensiones o la J son mucho}
For $i=1,..,n$ and $m>5$, let $G_i \cong v_0+ P_m$ and $J\cong P_m$. $\dim_l(G_i)=\left\lceil\frac{m-1}{4}\right\rceil$, and so $\sum \dim_l(G_i)= n\left\lceil\frac{m-1}{4}\right\rceil$. On the other hand, $|T_i|=0$, $|J|= m$, and $\sum |T_i| + |J| = m$. However, $\dim_l(H)=\left\lceil\frac{m-1}{4}\right\rceil$. In fact, the local metric dimension is equal to the lower bound given by Theorem \ref{inferior}.
\end{example}

This example motivates the following definition. Let $\{(G_i|J_i)\}$ be isometric and $T_i$ be a traversal for $\parallel(J_i:G_i)$. We say that $C\subseteq V(J)$ is a \emph{co-traversal}, if for every $uv\in E(G_i)-E(J)$ there exists $a \in C \cup T_i$ that distinguishes $uv$ and $C$ is such a set of minimum cardinality. It is clear that $V(J)$ is both co-traversal and out-solving. Thus $C$ is a co-traversal and $S$ is a minimal out-solving set for $J$ then $|C \cup S| \le |J|$ and we have the following statement which is an improvement for the upper bound.

\begin{theorem}\label{c s t}
Let $\{(G_i,J_i)\}$ be isometric with $T_i$ a traversal for $\parallel(J_i:G_i)$, $C$ a co-traversal, and $S$ a minimal out-solving set for $J$. Then $$\dim_l(H) \le \sum |T_i| + |C \cup S|$$ and the bound is tight.
\end{theorem}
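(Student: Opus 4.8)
The plan is to produce a single local metric set for $H$ whose cardinality equals the right-hand side, namely
$$B = \Bigl(\bigcup_i T_i\Bigr)\cup(C\cup S),$$
where I take both $S$ and $C$ inside $V(J)$; this is legitimate since, as noted just before the statement, $V(J)$ is simultaneously out-solving and a co-traversal, so a minimal out-solving set and the co-traversal may be chosen within $V(J)$. The first task is the cardinality count. Because the traversals $T_i\subseteq V(G_i-J_i)$ sit in the pairwise disjoint sets $V(G_i-J_i)$, and $C\cup S\subseteq V(J)$ is disjoint from every $V(G_i-J_i)$, no vertex is counted twice across the two outer unions, so $|B|=\sum_i|T_i|+|C\cup S|$. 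The inner union $C\cup S$ is written as a union precisely so that any overlap between the co-traversal and the out-solving set is absorbed rather than double counted.

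Next I would check that $B$ distinguishes every edge of $H$. I partition $E(H)$ into $E(J)$ and the sets $E(G_i)-E(J)$; every edge of $H$ lies in exactly one class, since any edge meeting a vertex of $G_i-J_i$ belongs to $G_i$ but not to $J$. For $uv\in E(J)$ the out-solving property of $S$ gives $s\in S$ with $d(s,u)\neq d(s,v)$; as $s\in V(J)$ and $J$ is embedded isometrically (Lemma \ref{isometricity}), this is an inequality of $H$-distances. For $uv\in E(G_i)-E(J)$ the definition of co-traversal furnishes $a\in C\cup T_i\subseteq B$ distinguishing $uv$ in $G_i$; since $G_i$ is isometrically embedded in $H$, distances from $a\in V(G_i)$ are unchanged, so $d_H(a,u)\neq d_H(a,v)$. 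Thus $B$ is a local metric set and $\dim_l(H)\le|B|=\sum|T_i|+|C\cup S|$, which is the asserted bound.

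Finally, for tightness I would exhibit an isometric family for which this upper bound is attained. The delicate point is the matching lower bound: Theorem \ref{inferior} only guarantees $\dim_l(H)\ge\sum|T_i|+|S|$, so when the co-traversal strictly enlarges $S$ (that is, $|C\cup S|>|S|$) the generic lower bound is too weak, and the example must be engineered so that the extra co-traversal vertices are genuinely indispensable. I expect this to be the main obstacle: one must choose the $G_i$ so that some edge $uv\in E(G_i)-E(J)$ is distinguished by no vertex outside $C\cup T_i$ — in particular by no vertex of the other copies $G_j-J_j$, a possibility not excluded by Lemma \ref{adonis} since that lemma controls only parallel edges — thereby forcing every local metric basis to contain a co-traversal. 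Verifying the distances in such a construction, rather than the upper bound itself, is where the real work lies.
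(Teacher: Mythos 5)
Your upper-bound argument is correct and is essentially the paper's proof: the paper compresses it to ``an application of the definitions and Theorem \ref{cota sup}'', meaning exactly your construction --- take $B=(\bigcup_i T_i)\cup(C\cup S)$, let $S$ handle $E(J)$, let $C\cup T_i$ handle $E(G_i)-E(J)$ (which by definition of co-traversal includes both $\parallel(J_i:G_i)$ and the remaining edges), and use isometricity to transfer $G_i$-distances to $H$-distances as in Lemma \ref{union de generadores}. One small blemish: your claim that a \emph{minimal} out-solving set may be chosen within $V(J)$ is not justified (a minimum-cardinality out-solving set can a priori use vertices outside $J$; the edge-by-edge replacement of an outside vertex by a gate vertex in $V(J)$ need not preserve cardinality). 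But this is harmless, since for the upper bound you only need $|B|\le\sum|T_i|+|C\cup S|$, which holds for arbitrary $S$ --- any overlap of $S$ with the $T_i$'s only shrinks $B$. Your exact-cardinality count is the only place the assumption $S\subseteq V(J)$ is used, and it is not needed.

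The genuine gap is tightness: the theorem asserts the bound is tight, and you do not exhibit an example, only a plan --- and the plan aims at the wrong target. You reason that since Theorem \ref{inferior} only gives $\dim_l(H)\ge\sum|T_i|+|S|$, a tight example must make the extra co-traversal vertices ``genuinely indispensable'', i.e.\ must realize $|C\cup S|>|S|$. But tightness does not require that: the easiest tight examples live in the regime $C\subseteq S$, where $|C\cup S|=|S|$ and Theorem \ref{inferior} already supplies the matching lower bound (this is precisely Corollary \ref{out come cotraversal}). The paper's witness is Example \ref{la suma de dimensiones o la J son mucho}: $G_i\cong v_0+P_m$ amalgamated over $J\cong P_m$, so that $H$ is a generalized fan, $\parallel(J_i:G_i)=\emptyset$ hence $T_i=\emptyset$, and a minimum co-traversal (two suitably placed path vertices, for appropriate $m$) can be chosen inside a minimum out-solving set $S$ of size $\left\lceil\frac{m-1}{4}\right\rceil=\dim_l(H)$, giving $\dim_l(H)=\sum|T_i|+|C\cup S|$. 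So to complete your proof you need only verify such an example; the harder construction you anticipate, forcing $|C\cup S|>|S|$ with equality in the theorem, is not needed and would in any case require a lower-bound argument the paper's machinery does not provide.
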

\begin{proof}
The proof is an application of the definitions and Theorem \ref{cota sup}. Example \ref{la suma de dimensiones o la J son mucho} gives the tighness of the bound.
\end{proof}

\begin{corollary}\label{out come cotraversal}
Let $\{(G_i,J_i)\}$ be isometric with $T_i$ a traversal for $\parallel(J_i:G_i)$, $C$ a co-traversal, and $S$ a minimal out-solving set for $J$. If $C \subseteq S$ then $$\dim_l(H) = \sum |T_i| + |S|.$$
\end{corollary}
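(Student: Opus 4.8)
The plan is to obtain the claimed equality by sandwiching $\dim_l(H)$ between the lower bound of Theorem~\ref{inferior} and the upper bound of Theorem~\ref{c s t}, both of which are phrased in terms of exactly the same data $\{T_i\}$ and $S$ appearing in the hypothesis here. The whole argument is therefore a matter of checking that the hypothesis $C\subseteq S$ collapses the two bounds onto a common value, so no new combinatorial work is needed beyond invoking the two established inequalities.

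First I would apply Theorem~\ref{c s t}. Since $\{(G_i,J_i)\}$ is isometric, $T_i$ is a traversal for $\parallel(J_i:G_i)$, $C$ is a co-traversal, and $S$ is a minimal out-solving set for $J$, that theorem gives
$$\dim_l(H) \le \sum |T_i| + |C \cup S|.$$
The assumption $C \subseteq S$ yields $C \cup S = S$, hence $|C \cup S| = |S|$, so the upper bound becomes
$$\dim_l(H) \le \sum |T_i| + |S|.$$
Next I would invoke Theorem~\ref{inferior} with the same traversals $T_i$ and the same minimal out-solving set $S$, which provides the matching lower bound
$$\dim_l(H) \ge \sum |T_i| + |S|.$$
Combining the two inequalities gives the desired equality $\dim_l(H) = \sum |T_i| + |S|$.

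The argument contains no genuine obstacle; the only point that requires care is that the lower bound of Theorem~\ref{inferior} and the upper bound of Theorem~\ref{c s t} must be applied to the \emph{same} traversals $\{T_i\}$ and the \emph{same} out-solving set $S$, so that the two bounds truly coincide rather than merely being close. This is guaranteed because the corollary fixes all of these objects in advance. It is worth noting that the hypothesis $C\subseteq S$ is precisely what forces $|C\cup S|=|S|$: a co-traversal already contained in a minimal out-solving set contributes no extra cost beyond that already counted by $S$, which is exactly the situation in which the general upper bound of Theorem~\ref{c s t} becomes sharp against the lower bound of Theorem~\ref{inferior}.
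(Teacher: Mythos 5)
Your proof is correct and takes essentially the same route as the paper, which disposes of the corollary in one line by sandwiching $\dim_l(H)$ between the lower bound of Theorem~\ref{inferior} and the co-traversal upper bound of Theorem~\ref{c s t} (the paper's proof actually cites Theorem~\ref{cota sup}, evidently a slip, since only the bound $\sum |T_i| + |C \cup S|$ collapses to $\sum |T_i| + |S|$ under $C \subseteq S$). Your explicit use of Theorem~\ref{c s t} and the observation that $C \subseteq S$ forces $|C \cup S| = |S|$ is exactly the intended argument, stated more carefully than in the paper itself.
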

\begin{proof}
The proof is a direct consequence of Theorems \ref{inferior} and \ref{cota sup}.
\end{proof}

We shall give two examples for Corollary \ref{out come cotraversal}.

\begin{example}
Suppose that $K_5$ is a complete graph with vertex-set $\{u_i\}$. Let $G_1 \cong G_2 \cong K_5 - \{u_1u_3, u_1u_4\}$, $J\cong K_2$, and $J_i=(u^i_3,u^i_4)$. Then $\parallel(J_i:G_i)=\{u^i_2u^i_5\}$ and $C = S = \{u_3\}$, and thus $\dim_l(H)= 3 = \sum |T_i| + |S|$.
\end{example}

\begin{example}\label{ex out come cotraversal}
Let $J$ be an arbitrary graph. For $m_i\in \mathbb{N}^+$, let $G_i\cong J + \overline{K_{m_i}}$. We observe that, for every $G_i$, $\parallel(J_i:G_i)=\emptyset$, and so by Theorem \ref{c s t}, if $C$ is a co-traversal and $S$ is an out-solving set for $J$, $\dim_l(H) \le |C \cup S|$.

Let $v_1 \in V(\overline{K_{m_1}})$ and $B$ be a local metric basis for $J + v_1$. We shall show that $B$ is also a local metric basis for $H$. As $B$ is a local metric basis for $J + v_1$, $B - \{v_1\}$ distinguishes $E(J)$. Consider $uv\in E(H-J)$, necessarily $u \in V(\overline{K_{m_i}})$ and $v\in V(J)$. We have two cases. \textbf{Case I: $v_1\in B$.} In this case, if $u=v_1$ $d(v_1,u) = 0 \ne 1 = d(v_1,v)$, otherwise, $d(v_1,u) = 2 \ne 1 = d (v_1,v)$. \textbf{Case II: $v_1\notin B$.} This means that there exists $b\in B$ such that $b \not\sim v$, and so $d(b,v) = 2 \ne 1 = d(b,v_1)$. Thus, $B$ is a local metric set for $H$. If there exists $A\subseteq V(H)$ such that $A$ is a local metric set and $|A|<|B|$ that means that $A$ is not a local metric basis for $J + v_1$. Therefore there exists $uv \in E(v_1+J) \subseteq E(H)$ such that $A$ does not distinguishes $uv$, a contradiction. This leads to $B$ being a local metric basis and $\dim_l(H) = \dim_l(J + K_1)$.

On the other hand, if $v_1\in B$, $\{v_1\}$ is a co-traversal and $B-\{v_1\}$ is a minimal out-solving set for $J$.
If $v_1\notin B$, then $B$ is both a co-traversal and a minimal out-solving set for $J$.
\end{example}

Inspired by Example \ref{ex out come cotraversal} we could observe the two following more general results.
\begin{lemma}
Let $J \cong \overline{K_m}$ and $\{(G_i,J_i)\}$ be isometric. If $L_i = G_i-J_i$ then $$\dim_l(H) \le \sum \dim_l(L_i + K_1) + 1.$$
\end{lemma}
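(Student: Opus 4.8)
The plan is to build a local metric set for $H$ by assembling, for each $i$, a small local metric set of $G_i$ and then invoking Lemma \ref{union de generadores}. The single extra vertex in the bound will be one fixed vertex $v_1\in V(J)$, shared by all the pieces. Concretely, I would fix $v_1$ and show that for every $i$ there is a local metric set $A_i$ of $G_i$ with $A_i\cap V(J_i)\subseteq\{v_1^i\}$ and $|A_i\setminus\{v_1^i\}|\le \dim_l(L_i+K_1)$. Granting this, $\bigcup_i A_i=\{v_1\}\cup\bigcup_i\bigl(A_i\setminus\{v_1^i\}\bigr)$, so by Lemma \ref{union de generadores} (using that $\{(G_i|J_i)\}$ is isometric) it is a local metric set for $H$ of cardinality at most $1+\sum_i\dim_l(L_i+K_1)$, which is the desired bound.

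To produce $A_i$, I would start from a local metric basis $B_i$ of $L_i+K_1$, writing $w$ for the apex, and set $D_i:=B_i\cap V(L_i)$ and $A_i:=\{v_1^i\}\cup D_i$; thus $|D_i|\le\dim_l(L_i+K_1)$. The clean part of the verification is the edges $ux$ with $u,x\in V(L_i)$. Since $w$ is at distance $1$ from every vertex of $L_i$, it distinguishes no such edge, so $B_i$ must distinguish $ux$ through some $\beta\in D_i$. Because $L_i=G_i-J_i$ is an induced subgraph, adjacency inside $L_i$ is the same in $L_i+K_1$ and in $G_i$; hence a value $1$ in $L_i+K_1$ forces the corresponding $G_i$-distance to be $1$, while a value $2$ together with $u\sim x$ forces the corresponding $G_i$-distance to be exactly $2$ (the path $\beta-u-x$ gives at most $2$, and non-adjacency in $L_i$ gives at least $2$). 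Using that each $G_i$ is isometrically embedded (Lemma \ref{isometricity}), so that $d_H=d_{G_i}$ on $V(G_i)$, one checks that $\beta$ still distinguishes $ux$ in $G_i$. The edges incident with $J_i$ at $v_1^i$ are handled directly by $v_1^i$, since $d_{G_i}(v_1^i,v_1^i)=0\neq 1=d_{G_i}(v_1^i,u)$.

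The main obstacle is the remaining boundary edges, namely $uv_k^i$ with $u\in V(L_i)$ and $k\neq 1$: these do not appear in $L_i+K_1$, so $B_i$ gives no information about them, and $v_1^i$ need not distinguish them. The natural idea is to match $uv_k^i$ with the apex edge $wu$ of $L_i+K_1$ (which $B_i$ must distinguish) via the collapse $\pi\colon V(G_i)\to V(L_i+K_1)$ sending every $v_k^i$ to $w$; the difficulty is that $\pi$ can only shorten distances and that the apex is adjacent to all of $L_i$ whereas $v_1^i$ is not, so a careless lift can leave some $uv_k^i$ undistinguished. I expect this to be the crux: one must choose $B_i$ (equivalently $D_i$) so that the $L_i$-endpoint of each such boundary edge is either in $D_i$ or is separated from $v_k^i$ by a vertex of $D_i$, exploiting that an endpoint $u\in D_i$ distinguishes $uv_k^i$ immediately ($0\neq 1$). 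Once every edge of every $G_i$ is covered in this way, the assembly step via Lemma \ref{union de generadores} completes the proof, and the single shared vertex $v_1$ accounts for the $+1$.
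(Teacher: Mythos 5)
There is nothing in the paper to compare your argument against: this lemma is one of two observations the authors state as ``inspired by'' Example \ref{ex out come cotraversal}, and only the second of them (Lemma \ref{ex out come cotraversal II}) is given a proof. Judged on its own merits, your proposal has a genuine gap, and it sits exactly where you place it --- the boundary edges $uv_k^i$ with $k\ne 1$ --- but the problem is worse than a missing technical step: the strategy itself is blocked. Your plan requires, for each $i$, a local metric set $A_i$ of $G_i$ with $A_i\cap V(J_i)\subseteq\{v_1^i\}$ and $|A_i|\le \dim_l(L_i+K_1)+1$, and such sets need not exist. Concretely, let $G_1\cong G_2\cong F$, where $F$ is the friendship graph consisting of $k$ triangles $c_0u_jv_j$ sharing the hub $c_0$, and take $J_i=\{v_1^i,\dots,v_k^i\}$, one outer vertex per triangle. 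This $J$ is independent, and all pairwise $J$-distances equal $2$ in each copy, so $\{(G_i|J_i)\}$ is isometric. In $F$, the edge $u_jv_j$ is distinguished only by its own endpoints (every other vertex is at distance $1$ or $2$ from both), so any local metric set of $G_i$ has at least $k$ vertices; yet $L_i\cong K_{1,k}$ and $\dim_l(K_{1,k}+K_1)=2$, so your budget is $3$. For $k\ge 4$ no choice of $B_i$, however careful, can realize your first step, and the reduction to Lemma \ref{union de generadores} cannot be the route.

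The same example shows the mechanism any proof must exploit: in $H$, the hub $c_0^2$ of the second copy has $d_H(c_0^2,v_j)=1$ and $d_H(c_0^2,u_j^1)=2$, so a single vertex of $G_2$ distinguishes all $k$ problematic edges of $G_1$. Since $J$ is edgeless, every boundary edge $uv_k^i$ lies in $Solv(J_i:G_i)$ (because $d(v_k,J)=0<d(u,J)$), and its resolution is inherently cross-graph --- precisely what Lemma \ref{distinguish cover for projections} and the cover machinery of the last subsection formalize, and what a union of per-graph local metric sets structurally cannot capture. To be fair, your interior-edge transfer is correct and nicely argued: a distinguishing vertex $\beta\in V(L_i)$ in $L_i+K_1$ yields the pattern $\{0,1\}$ or $\{1,2\}$, and both persist in $G_i$ because $L_i$ is induced; moreover your whole construction does go through verbatim in the special case $G_i\cong L_i+\overline{K_m}$, where $v_1^i$ is adjacent to all of $L_i$ and genuinely simulates the apex --- essentially Example \ref{ex out come cotraversal} with the roles of $J$ and $\overline{K_{m_i}}$ swapped, which is plausibly what the authors had in mind. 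As literally stated, with arbitrary attachments of $L_i$ to the independent set $J$, your set leaves the boundary edges unhandled; indeed, elaborations of the friendship gadget (e.g., attaching the $v_j$ in $G_2$ to pairwise distinct common neighbors $y_{ab}$ of $v_a,v_b$, so that each $y_{ab}$ resolves only two gadgets) suggest the unproved lemma may itself fail in full generality, which is worth flagging rather than attempting to patch your argument.
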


\begin{lemma}\label{ex out come cotraversal II}
Let $J$ be an arbitrary graph, $G_1 \cong J + \overline{K_m}$, with $m \ge 1$, and for $i>1$, $G_i$ be a graph  containing an induced subgraph $J_i \cong J$. If $v\in V(G_i)-V(J_i)$ implies $N(v)\subseteq V(J_i)$ and $\{(G_i|J_i)\}$ is isometric then $$\dim_l(J + K_1) \le \dim_l(H) \le \dim_l(J + K_1) +\epsilon,$$
where $\epsilon=1$ if there exists a basis of $J +K_1$ not containing the only vertex in $K_1$,
and $\epsilon=0$ otherwise.
\end{lemma}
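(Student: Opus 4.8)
The plan is to fix a vertex $v_1\in V(\overline{K_m})\subseteq V(G_1)$ and to work with the copy of $J+K_1$ carried by $V(J)\cup\{v_1\}$ inside $H$. Since $G_1=J+\overline{K_m}$ forces $d_H(a,b)=\min\{d_J(a,b),2\}$ for all $a,b\in V(J)$, these distances coincide with those of $J+v_1\cong J+K_1$. Two elementary distance computations drive everything. First, for any edge $xy\in E(H)$ with $x\in V(G_i-J_i)$ for some $i$ and $y\in V(J)$, the vertex $v_1$ distinguishes it: as $v_1$ is adjacent to all of $J$, $d_H(v_1,y)=1$, whereas $d_H(v_1,x)=0$ if $x=v_1$, and, since $N(x)\subseteq V(J)$ and $x\not\sim v_1$ for $x\ne v_1$, $d_H(v_1,x)=1+\min_{t\in N(x)}d_H(v_1,t)=2$ otherwise. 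Because the hypothesis $N(v)\subseteq V(J_i)$ makes each $G_i-J_i$ edgeless, $E(G_i-J_i)=\emptyset$, so \emph{every} edge of $H$ is either an edge of $J$ or an edge of this type. Second, $v_1$ never distinguishes an edge of $J$ (it is at distance $1$ from both ends), so in any local metric set the edges of $J$ must be distinguished by vertices lying in $V(J)$.

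For the upper bound I would take a local metric basis $B$ of $J+K_1$. By the second fact, $B\cap V(J)$ already distinguishes $E(J)$ in $J+v_1$, hence also in $H$ by the distance agreement above. If $v_1\in B$, the first fact shows $v_1$ distinguishes every remaining edge of $H$, so $B$ itself is a local metric set for $H$ and $\dim_l(H)\le|B|=\dim_l(J+K_1)$. If $v_1\notin B$, then $B\cup\{v_1\}$ is a local metric set of size $\dim_l(J+K_1)+1$. Thus if $\epsilon=0$ every basis contains $v_1$ and $\dim_l(H)\le\dim_l(J+K_1)$, while if $\epsilon=1$ I may be forced to spend the extra vertex $v_1$, giving $\dim_l(H)\le\dim_l(J+K_1)+1$; in both cases $\dim_l(H)\le\dim_l(J+K_1)+\epsilon$.

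For the lower bound I would start from a local metric set $C$ of $H$ and project it into $J+v_1$ by fixing every vertex of $C\cap V(J)$ and sending every other vertex of $C$ to $v_1$, setting $C^{*}=(C\cap V(J))\cup\{v_1:\,C\not\subseteq V(J)\}$, so that $|C^{*}|\le|C|$. The universal edges $v_1w$ of $J+v_1$ are handled automatically: whichever vertex of $C$ distinguishes $v_1w$ in $H$ projects to a vertex of $C^{*}$ distinguishing it in $J+v_1$, since a $J$-vertex keeps the same distances and any non-$J$ vertex maps to $v_1$, which distinguishes $v_1w$ outright. It then remains only to show that $C^{*}$ distinguishes $E(J)$, that is, that $C\cap V(J)$ does; this would give $\dim_l(J+K_1)\le|C^{*}|\le|C|$.

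The main obstacle is precisely this last point. An outside vertex $s$ of some $G_i$ with $i>1$ can distinguish an edge $ab\in E(J)$: its distance function on $V(J)$ takes the values $1,2,3$ according to the distance from the set $N(s)\subseteq V(J)$, and a single such $s$ may resolve several edges of $J$ that no single vertex of $J$ resolves simultaneously, so the naive vertex-by-vertex substitution genuinely fails. The heart of the proof must therefore invoke the isometry hypothesis to tame these vertices: the forced equality $d_{G_i}(a,b)=\min\{d_J(a,b),2\}$ obliges every pair of $J$-vertices to acquire a common neighbour and so sharply constrains the admissible neighbourhoods $N(s)$. I would exploit this, together with the fact that the remaining edges of $H$ (in particular the pendant edges $sa$ at each such $s$) impose their own demands on $C$, to argue that the distinguishing power of any outside vertex on $E(J)$ is already supplied by vertices of $J$ that $C$ is forced to contain, so that $C\cap V(J)$ is an out-solving set for $J$ of the required cardinality. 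Turning this balancing argument into a rigorous replacement is where the real difficulty lies.
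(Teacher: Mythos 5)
Your upper-bound half is correct and is essentially the paper's own argument: since $N(v)\subseteq V(J_i)$ makes every $V(G_i-J_i)$ independent, each edge of $H$ lies in $E(J)$ or joins an outside vertex to $V(J)$; the vertex $v_1$ distinguishes all edges of the second kind, $B\cap V(J)$ handles $E(J)$ because $d_H$ and $d_{J+v_1}$ agree on $V(J)\cup\{v_1\}$ (both equal $\min\{d_J(\cdot,\cdot),2\}$ there), and your $\epsilon$ bookkeeping matches the paper's.

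The gap you flag in the lower bound is genuine, and you should know it cannot be repaired: the inequality $\dim_l(J+K_1)\le\dim_l(H)$ is false under the stated hypotheses, so the ``balancing argument'' you defer does not exist. (The paper's own proof commits exactly the step you refused to make: from $|A|<|B\cup\{v_1\}|$ it concludes that $A$ is not a local metric basis for $J+v_1$, silently assuming vertices of $H$ outside $V(J)\cup\{v_1\}$ have no more resolving power on $E(J+v_1)$ than single vertices of $J$ --- precisely the distance-from-a-set phenomenon you identified.) Concretely, take $J\cong kK_2$ with edges $a_ib_i$ and $m=1$, so $J+K_1$ is the friendship graph and $\dim_l(J+K_1)=k$, since only $a_i$ or $b_i$ distinguishes $a_ib_i$ there. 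Let $G_2$ consist of $J$ together with independent outside vertices $s$ with $N(s)=\{a_1,\ldots,a_k\}$, $t$ with $N(t)=\{b_1,\ldots,b_k\}$, and $u_{ij}$ with $N(u_{ij})=\{a_i,b_j\}$ for $i\ne j$. Every pair of $J$-vertices at $J$-distance at least $2$ then has a common neighbour in each $G_i$, so all $d_{G_i}$ equal $\min\{d_J(\cdot,\cdot),2\}$ on $V(J)$, the family is isometric, and the hypothesis $N(v)\subseteq V(J_i)$ holds. But in $H$ one checks $d(s,a_i)=1$ while $d(s,b_i)=d(s,v_1)=d(s,u_{ij})=2$, and symmetrically for $t$, so $\{s,t\}$ distinguishes every edge of $H$ and $\dim_l(H)=2<k$ for $k\ge 3$. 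Thus where your plan hopes isometricity will ``tame'' the outside vertices, it does the opposite: for $J$ of diameter greater than $2$, isometricity forces many outside vertices with prescribed multi-vertex neighbourhoods, and exactly these act as distance-from-a-set resolvers on $E(J)$ that beat every single vertex of $J$. A correct version of the lemma needs an additional hypothesis (for instance, diameter of $J$ at most $2$ together with a restriction on the neighbourhoods $N(v)$ for $v\in V(G_i)-V(J_i)$) that blocks this construction; no projection or replacement argument of the kind you sketch can succeed as stated.
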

\begin{proof}
Observe that, for every $G_i$, $\parallel(J_i:G_i)=\emptyset$. Therefore, by Theorem \ref{c s t}, if $C$ is a co-traversal for $H$ and $S$ is a minimal out-solving set for $J$ then $\dim_l(H)\le |C \cup S|$. Let $v_1 \in V(\overline{K_{m}})$ and $B$ a local metric basis for $J + v_1$. We shall show that $B\cup\{v_1\}$ is a local metric basis for $H$. As $B$ is a local metric basis for $J + v_1$, $B-\{v_1\}$ distinguishes $E(J)$. Consider $uv\in E(H-J)$, necessarily $u\in V(J)$ and $v\not\in V(J)$. If $v=v_1$ then  $d(v_1,u) = 1 \ne 0 = d(v_1,v)$ and otherwise, $d(v_1,u) = 1 \ne 2 = d(v_1,v)$. Thus $B\cup\{v_1\}$ is a local metric set. Now assume that there exists $A \subseteq V(H)$ such that $A$ is a local metric set and $|A|<|B\cup\{v_1\}|$. We shall consider two cases: \textbf{Case I: $v_1 \in B$.} In this case $A$ is not a local metric basis for $J + v_1$ and so there exists $uv\in E(J + v_1) \subseteq E(H)$ such that $A$ does not distinguishes $uv$, a contradiction. \textbf{Case II: $v_1 \notin B$.} To avoid the former contradiction, we must have $|A| = |B| = \dim_l(J+K_1)$, and the result follows.
\end{proof}

\subsection{Yet A Better Upper Bound Using Covers}

Again we could find examples where the bound in Theorem \ref{c s t} is still far away from the local metric dimension of the subgraph-amalgamation.

\begin{example}\label{dim 2}
For $m>1$ and $r=4m+1$, consider $G \cong v_0 + P_r$, where $V(P_r)=\{u_j\}$. Let $\{G_i\}$ be a set of $n$ graphs isomorphic to $G$ and $J \cong \overline{K_m}$, with $V(J_i)=(u^i_{4k+3})$. Notice that $J_i$ is a local metric basis for $G_i$ and so $\dim_l(G_i)= m$. We also have that $\parallel(J_i:G_i)=\{v_0^iu^i_{4k+2}, v_0^iu^i_{4k+4}\}$, $T_i=\{v_0^i\}$ is a traversal, $V(J)$ is a co-traversal, and $S=\emptyset$. Theorem \ref{c s t} gives $\dim_l(H) = \sum |T_i| + |C \cup S|= n + m$, however $\dim_l(H)=n$, since $\{v_0^i\}$ is a local metric basis for $H$.
\end{example}

In a quest for a better upper bound, we define the following. For $uv\in E(G)$, we say that $uv$ is \emph{solvable by} $J$, if $d(u,J) \ne d(v,J)$ and, in this case, we write $uv \in  Solv(J:G)$.

\begin{lemma}\label{distinguish cover for projections}
Let $\{(G_i,J_i)\}$ be an isometric amalgam, $uv\in Solv (J_i:G_i)$, where  $d(u,J)<d(v,J)$, and $c\in V(J)$. If $d(u,c)=d(u,J)$ then $c$ distinguishes $uv$. Moreover, if for $j \ne i$ there exists $s\in V(G_j)$ such that $d(s,c)=d(s,J)$ then $s$ distinguishes $uv$.
\end{lemma}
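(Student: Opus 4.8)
The plan is to derive both claims from one elementary observation about distances to $J$. First I would record the consequence of $uv$ being an edge: since $d(u,J)<d(v,J)$ and the triangle inequality along the edge $uv$ gives $d(v,J)\le d(u,J)+1$, we must have $d(v,J)=d(u,J)+1$. Writing $k:=d(u,J)$, this means $d(v,J)=k+1$, and this single equation drives everything.

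For the first claim, assume $d(u,c)=d(u,J)=k$. Because $c\in V(J)$, the distance from $v$ to $c$ is at least the distance from $v$ to all of $J$, so $d(c,v)\ge d(v,J)=k+1>k=d(c,u)$. Hence $d(c,u)\ne d(c,v)$ and $c$ distinguishes $uv$. This part needs nothing beyond the definition of $d(\cdot,J)$ and the edge inequality above.

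For the second, more delicate claim, the key structural point is that $s\in V(G_j)$ with $j\ne i$ while $u,v\in V(G_i)$, so by the amalgamation construction every $s$--$x$ path with $x$ on the $G_i$ side must pass through a vertex of $J$, and by isometric embedding (Lemma \ref{isometricity}) distances inside each $G_i$ agree with those in $H$. I would first show $d(s,u)=d(s,J)+d(u,J)$. Writing $m:=d(s,J)=d(s,c)$, the walk from $s$ to $c$ (inside $G_j$) followed by a shortest $c$--$u$ path (inside $G_i$) has length $m+k$, so $d(s,u)\le m+k$; conversely, any $s$--$u$ path meets some $w\in V(J)$ and splits at $w$ into a prefix of length at least $d(s,J)=m$ and a suffix of length at least $d(u,J)=k$, giving $d(s,u)\ge m+k$. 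Thus $d(s,u)=m+k$. Then the same splitting bounds $d(s,v)$: any $s$--$v$ path meets some $w\in V(J)$ and so has length at least $d(s,J)+d(v,J)=m+(k+1)$, whence $d(s,v)\ge m+k+1>m+k=d(s,u)$, and $s$ distinguishes $uv$.

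The main obstacle is making the ``every path crosses $J$'' splitting fully rigorous, in particular that the prefix and suffix lower bounds may be applied simultaneously (they are edge-disjoint portions of the path separated by the crossing vertex $w$, so their lengths add) and that the degenerate cases are covered: when $k=0$ we have $u=c\in V(J)$, and when $m=0$ we have $s=c\in V(J)$, reducing the second claim to the first. In each degenerate case the same inequalities specialize correctly, so no separate argument is required. Everything else is routine triangle-inequality bookkeeping, and the isometry hypothesis is used only to ensure the within-component and crossing distances are the genuine distances in $H$.
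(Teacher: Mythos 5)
Your proof is correct, and it organizes the argument differently from the paper. For the second claim the paper proceeds by contradiction: assuming $d(s,u)=d(s,v)$, it picks vertices $u_1,v_1\in V(J)$ on geodesics from $s$ to $u$ and from $s$ to $v$, squeezes both distances into the equalities $d(s,u)=d(s,c)+d(c,u)$ and $d(s,v)=d(s,c)+d(c,v)$, and then concludes $d(c,u)=d(c,v)$, contradicting the first claim. You instead prove the exact identity $d(s,u)=d(s,J)+d(u,J)$ (upper bound by concatenation through $c$, lower bound by splitting an arbitrary $s$--$u$ path at a crossing vertex $w\in V(J)$) together with $d(s,v)\ge d(s,J)+d(v,J)\ge d(s,u)+1$, which gives the conclusion directly, with a strict inequality, and without invoking the first claim at all. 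The two arguments rest on the same three facts --- minimality of $d(s,c)=d(s,J)$, minimality of $d(u,c)=d(u,J)$, and the structural fact that in the amalgamation every path between the $G_j$-side and the $G_i$-side meets $V(J)$ (the paper's $u_1,v_1$ are exactly your splitting vertices, taken on geodesics so that the split is an equality rather than an inequality) --- so the estimates are essentially shared, but your direct version buys something: the quantitatively stronger conclusion $d(s,v)>d(s,u)$, uniform treatment of the degenerate cases $u=c$ and $s=c$ (the paper dismisses $s\in V(J_j)$ separately as trivial), and an explicit accounting of where isometricity enters, namely in identifying the $G_i$-distances appearing in the hypotheses with distances in $H$, which the paper leaves implicit. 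One minor remark: your opening observation $d(v,J)=d(u,J)+1$ is correct but slightly more than needed; only $d(v,J)\ge d(u,J)+1$, immediate from integrality and $d(u,J)<d(v,J)$, is actually used.
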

\begin{proof} The first result follows form $d(v,c) \ge d(v,J) > d(u,J) = d(u,c)$. If $s\in V(J_j)$ then the second result is trivial. Now let $s\in V(G_j-J_j)$ and suppose, for a contradiction, $d(s,u)=d(s,v)$. Let $P_{su}$ and $P_{sv}$ be the two minimal paths from $s$ to $u$ and from $s$ to $v$. Let $u_1 \in P_{su} \cap V(J)$ and $v_1 \in P_{sv} \cap V(J)$. By the definition of $c$, $d(s,u_1)+d(u_1,u) = d(s,u) \le d(s,c)+d(c,u) \le d(s,u_1) + d(u_1,u)$ and so $d(s,u)=d(s,c)+d(c,u)$. On the other hand, $$d(s,v_1)+d(v_1,v) = d(s,v) \le d(s,c) + d(c,v) \le d(s,v_1) + d(c,u) +1$$ $$ \le d(s,v_1) + d(v,J) \le d(s,v_1) + d(v_1,v).$$ This leads to $d(s,v)=d(s,c)+d(c,v)$. Since $d(s,v)=d(s,u)$, we have $d(c,u)=d(c,v)$, a contradiction, and the result follows.
\end{proof}

The results in Lemma \ref{distinguish cover for projections} lead us to the following definitions. A co-traversal $C$ is said to be $\emph{projective}$ if for every $uv\in Solv(J_i:G_i)$, where $d(u,J)<d(v,J)$ and $T_i$ does not distinguish $uv$, there exists a vertex $c\in C$ such that $d(u,c)=d(u,J)$. For a co-traversal $C$ we say that a set $\bar{C_i} \subseteq V(G_i)$ is a \emph{$C_i$-cover} if, for every $c\in C$, there exists a vertex $a \in \bar{C_i}$, such that $d(a,c)=d(a,J)$. We shall categorize the $C_i$-covers into two types as follow. A $C_i$-cover $\bar{C_i}$ is called \emph{complete} if  $\bar{C_i}$ distinguishes $E(G_i-J_i)-(\parallel(J_i:G_i)\cup Solv(J_i:G_i))$. A $C_i$-cover is called \emph{self-resolving} if $\bar{C_i}$ distinguishes $(E(G_i-J_i))-\parallel(J_i:G_i)$. For convenience, if $(E(G_i-J_i)) = (\parallel(J_i:G_i)\cup Solv(J_i:G_i))$, we consider $\emptyset$  a complete  $C_i$-cover.

If $\{(G_i|J_i)\}$ is isometric then for every $G_i$, a co-traversal $C$ is self-resolving. Moreover, if $(E(G_i-J_i)) \ne (\parallel(J_i:G_i) \cup Solv(J_i:G_i))$, $C$ is also complete. Therefore for any co-traversal $C$, we could always find a set ${\cal C}=\{\bar{C_i}\}$ of complete $C_i$-covers such that $|\cup C_i| \le |C|$ and the following theorem is an improvement to our previous upper bounds.

\begin{theorem}\label{complete covers}
Let $\{(G_i|J_i)\}$ be isometric, $C$ be a projective co-traversal, $T_i$'s be traversals, and $S$ be a minimal out-solving set for $J$. Suppose that ${\cal C}=\{\bar{C_i}:\bar{C_i}\subseteq V(G_i) \}$ is a set of complete $C_i$-covers for $C$. If either there exists a $\bar{C_i}\in {\cal C}$ such that $\bar{C_i}$ is self-resolving or for every $c\in C$ there exist two vertices $x\in \bar{C_i}$ and $y\in\bar{C_j}$, where $i \ne j$ such that $d(x,c)=d(x,J)$ and $d(y,c)=d(y,J)$ then $$\dim_l(H)\le \sum |T_i| + |S| + \sum |\bar{C_i}|$$ and the bound is tight.
\end{theorem}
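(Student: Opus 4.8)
The plan is to exhibit an explicit local metric set of the prescribed size and verify that it distinguishes every edge of $H$. I would set
$$B = \bigcup_i T_i \;\cup\; \bigcup_i \bar{C_i} \;\cup\; S,$$
so that $|B| \le \sum |T_i| + \sum |\bar{C_i}| + |S|$; since $\dim_l(H)\le |B|$ as soon as $B$ is a local metric set, it suffices to prove that $B$ distinguishes $E(H)$. The verification is organized by partitioning the edges of $H$: every edge lies either in $E(J)$ or in $E(G_i)-E(J)$ for a unique $i$. Throughout I use that each $G_i$ is isometrically embedded in $H$ (Lemma \ref{isometricity}), so all distances relevant to an edge of $G_i$ may be computed inside $G_i$.

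First the easy strata. An edge $uv\in E(J)$ is distinguished by $S$, as $S$ is out-solving for $J$. An edge $uv\in\parallel(J_i:G_i)$ is distinguished by $T_i\subseteq B$, by the definition of a traversal. An edge of $E(G_i-J_i)$ that is neither parallel nor solvable lies in $E(G_i-J_i)-(\parallel(J_i:G_i)\cup Solv(J_i:G_i))$ and is therefore distinguished by the complete $C_i$-cover $\bar{C_i}\subseteq B$. The remaining stratum, the solvable edges $Solv(J_i:G_i)$, which also absorbs every cross edge joining $V(J)$ to $V(G_i-J_i)$ since such an edge satisfies $d(\cdot,J)=1\ne 0$, is where the real work lies.

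For a solvable edge $uv$ with $d(u,J)<d(v,J)$ I would split on whether $T_i$ already distinguishes it; if it does, we are done. If it does not, projectivity of $C$ supplies a vertex $c\in C$ with $d(u,c)=d(u,J)$, and Lemma \ref{distinguish cover for projections} guarantees that any $s\in V(G_j)$ with $j\ne i$ and $d(s,c)=d(s,J)$ distinguishes $uv$. The task is thus to produce such a witness inside $B$ for every solvable edge, which is exactly what the two hypotheses are engineered to do. Under the second hypothesis the argument is uniform: for the given $c$ there are $x\in\bar{C_i}$ and $y\in\bar{C_j}$ with $i\ne j$ both seeing $c$ at distance $d(\cdot,J)$, so at least one of these covers sits in a graph distinct from the one carrying $uv$, and its vertex is the desired witness. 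Under the first hypothesis one fixes the self-resolving cover $\bar{C_\ell}$: for edges carried by $G_i$ with $i\ne\ell$ the cover $\bar{C_\ell}$, being a $C_\ell$-cover, sees every $c$ and furnishes the cross-graph witness, while the solvable edges carried by $G_\ell$ itself are distinguished internally, since a self-resolving cover resolves all of $E(G_\ell-J_\ell)-\parallel(J_\ell:G_\ell)$.

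The step I expect to be the main obstacle is precisely this last stratum, because the ``moreover'' clause of Lemma \ref{distinguish cover for projections} insists that the distinguishing vertex lie in a graph \emph{different} from the one containing the edge; an empty complete cover, which the stated convention permits when $E(G_i-J_i)=\parallel(J_i:G_i)\cup Solv(J_i:G_i)$, sees no vertex of $C$, so one must ensure that some nonempty, $c$-seeing cover always survives in another copy. Verifying that each of the two alternative hypotheses forecloses this failure, and in particular that the self-resolving case leaves no solvable edge of $G_\ell$ unresolved, is the crux. Once $B$ is shown to be a local metric set the upper bound follows, and I would close by revisiting an instance such as Example \ref{dim 2}, where the covers collapse to the empty set and the bound meets the lower bound of Theorem \ref{inferior}, to exhibit tightness.
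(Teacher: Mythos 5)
Your proposal matches the paper's proof essentially step for step: the paper verifies the same set $B=\bigcup T_i\cup\bigcup \bar{C_i}\cup S$ by the same four-way stratification of $E(H)$ (edges of $E(J)$ via $S$, parallel edges via the $T_i$, the residual edges of $E(G_i-J_i)$ via the complete covers, and solvable edges via projectivity of $C$ combined with Lemma \ref{distinguish cover for projections}, splitting on whether the cover in the edge's own copy is self-resolving), and it likewise invokes Example \ref{dim 2} for tightness. The only difference is expository: you are more explicit than the paper about the cross edges between $V(J)$ and $V(G_i-J_i)$ landing in the solvable stratum and about the indexing subtlety and the empty-cover convention, points the paper's own proof passes over silently.
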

\begin{proof}
For $uv \in E(H)$ we have the following four cases. If $uv \in E(J)$, then it will be distinguished by $S$. If $uv\in \parallel(J_i:G_i)$, then $T$ distinguishes $uv$. If $uv\in (E(G_i-J_i))-(\parallel(J_i:G_i) \cup Solv(J_i:G_i))$, it will be distinguished by  ${\cal C}$. The last case is if $uv\in Solv(J_i:G_i)$. We could suppose without lost of generality that $d(u,J)<d(v,J)$. If $\bar{C_i}$ is self-resolving then $\bar{C_i}$  distinguishes $uv$. Otherwise, as $C$ is projective, there exists $c\in C$ such that $d(c,u)=d(u,J)$ and $x \in \bar{C_j} \ne \bar{C_i}$ such that $d(x,c)=d(x,J)$. Then, by Lemma \ref{distinguish cover for projections} $x\in \bar{C_j}$ distinguishes $uv$.

Example \ref{dim 2} shows that the bound is tight.

We also have another example showing the tightness of the bound and this example uses a self-resolving cover. Let $V(G_1)=\{u_1, \ldots, u_5\}$, where $G_1 \cong K_5 -\{u_1u_3, u_1u_4\}$, $G_2\cong C_5$, where $V(G_2)=\{v_i\}$, and $J\cong K_2$, where $V(J_1)=(u_4,u_5)$ and $V(J_2)=(v_3,v_4)$. In this case, $\parallel(J_1:G_1)=\{u_2u_3\}$ and  $\parallel(J_2:G_2)=\{\}$. We could choose  $T_1=\{u_1\}$ with the advantage that it is both a traversal and a minimal out-solving set for $J$. As $Solv(J_1:G_1)=E(G_1)-\{u_1u_2, u_2u_3, u_4u_5\}$, $Solv(J_2:G_2)=E(G_2)-\{v_3v_4\}$, the co-traversal $C=V(J)=\{u_4,u_5\}$ is projective. We also have $\emptyset$ as a complete $C_1$-cover and $\{v_1\}$ as a self-resolving $C_2$-cover. Then $T_1 \cup S \cup \bar{C_i} = \{u_1,v_1\}$ is a local metric set for $H$ and, as $H$ is not bipartite, $B$ is a local metric basis for $H$.
\end{proof}


If the hypotheses of Theorem \ref{complete covers} are not fulfilled then we have the following result.
\begin{lemma} Let $\{(G_i|J_i)\}$ be isometric, $T_i$ be a traversal for $G_i$, $S$ be a minimal out-solving set for $J$, and $C$ be a projective co-traversal. Suppose that $\bar{C_i}$ be a cover such that $|\cup T_i \bigcup S \cup \bar{C_i}|$ is minimum. If for every $i$, $E(G_i-J_i) = \parallel(J_i:G_i)\cup Solv(J_i:G_i)$ and no $\bar{C_i}$ is self-resolving then $$\dim_l(H)=\sum |T_i| + |S| + \sum |\bar{C_i}|.$$
\end{lemma}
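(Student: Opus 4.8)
The plan is to prove the two inequalities $\dim_l(H)\le \sum|T_i|+|S|+\sum|\bar{C_i}|$ and $\dim_l(H)\ge \sum|T_i|+|S|+\sum|\bar{C_i}|$ separately, the first being essentially a reprise of Theorem \ref{complete covers} and the second carrying the genuinely new content. For the upper bound I would first note that the two standing hypotheses place us squarely in the applicable case of Theorem \ref{complete covers}: since $E(G_i-J_i)=\parallel(J_i:G_i)\cup Solv(J_i:G_i)$ for every $i$, the empty set is a complete $C_i$-cover and there are no ``leftover'' edges, so every $\bar{C_i}$ is vacuously complete; and since no $\bar{C_i}$ is self-resolving, the first disjunct of that theorem is unavailable. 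Taking the minimization over cover families $\{\bar{C_i}\}$ for which $B=\bigcup T_i\cup S\cup\bigcup\bar{C_i}$ is a local metric set, any such family must reach each relevant $c\in C$ from two distinct copies, which is exactly the second disjunct; the edge-by-edge verification then proceeds verbatim as in Theorem \ref{complete covers}, distinguishing $E(J)$ by $S$, the parallel edges by the traversals, and the $Solv$ edges via projectivity of $C$ together with Lemma \ref{distinguish cover for projections}. This yields $\dim_l(H)\le\sum|T_i|+|S|+\sum|\bar{C_i}|$.

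For the lower bound I would take an arbitrary local metric basis $A$ and set $A_i=A\cap V(G_i-J_i)$ and $A_J=A\cap V(J)$. By Theorem \ref{inferior} and Lemma \ref{adonis}, each $A_i$ must already contain a traversal for $\parallel(J_i:G_i)$ and $A_J$ must contain an out-solving set for $J$, accounting for $\sum|T_i|+|S|$ vertices. The remaining task is to show that resolving every $Solv$ edge forces at least $\sum|\bar{C_i}|$ further vertices disjoint from those just charged. Here I would exploit that no $\bar{C_i}$ is self-resolving: the $Solv$ edges of $G_i$ cannot all be distinguished from within $G_i$ by the cover alone, so by Lemma \ref{distinguish cover for projections} they must be resolved either by co-traversal vertices of $J$ in $A_J$ or by vertices lying in the \emph{other} copies $A_j$ that are aligned with the appropriate $c\in C$. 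In either case the vertices of $A$ devoted to the $Solv$ edges constitute $C_i$-covers, and the minimality hypothesis identifies their total cardinality with $\sum|\bar{C_i}|$, giving the matching lower bound.

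The step I expect to be the main obstacle is the bookkeeping in the lower bound: establishing that the vertices of $A$ used for the $Solv$ edges genuinely assemble into covers of total size $\sum|\bar{C_i}|$ and are disjoint from the traversal and out-solving vertices already counted. The delicate point is that a single vertex of $A$ could a priori serve several roles simultaneously (resolving a parallel edge, an edge of $J$, and a $Solv$ edge at once), which would collapse the sum into something smaller. The hypotheses $E(G_i-J_i)=\parallel(J_i:G_i)\cup Solv(J_i:G_i)$ and ``no $\bar{C_i}$ self-resolving'' are precisely what forbid such economies, and turning this intuition into a rigorous disjointness-and-minimality argument — showing that the optimal cover can be neither absorbed into the traversal/out-solving data nor shrunk below $\sum|\bar{C_i}|$ — is where the real work lies.
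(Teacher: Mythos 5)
The first thing to note is that the paper states this lemma with no proof at all, so there is no official argument to compare yours against; your proposal has to stand on its own. Its upper-bound half does stand: under the hypothesis $E(G_i-J_i)=\parallel(J_i:G_i)\cup Solv(J_i:G_i)$ every $C_i$-cover is vacuously complete, and since each $\bar{C_i}$ is by definition a $C_i$-cover it contains, for \emph{every} $c\in C$, an aligned vertex with $d(a,c)=d(a,J)$; hence, as soon as there are at least two amalgamands, the second disjunct of Theorem \ref{complete covers} holds automatically and yields $\dim_l(H)\le\sum|T_i|+|S|+\sum|\bar{C_i}|$. Your reduction to that theorem is correct.

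The lower bound, however, contains a genuine gap, and the difficulty you flag in your closing paragraph is not residual bookkeeping --- it is the entire content of the lemma, and your sketch does not supply the idea needed to close it. Two concrete failures. First, you assert that the basis vertices ``devoted to the $Solv$ edges constitute $C_i$-covers.'' But Lemma \ref{distinguish cover for projections} is only a sufficiency statement: a vertex $s\in V(G_j)$ aligned with a suitable $c$ distinguishes $uv\in Solv(J_i:G_i)$; nothing in the paper shows the converse, that a vertex of $G_j$ which distinguishes such an edge must satisfy $d(s,c)=d(s,J)$ for some $c\in C$. That converse is exactly the paper's first Open Question (``does any basis of a subgraph-amalgamation contain a cover?''), so your argument assumes a statement the authors themselves acknowledge as open. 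Relatedly, ``no $\bar{C_i}$ is self-resolving'' constrains only the chosen covers, not arbitrary subsets of $V(G_i)$: a basis is free to use non-cover vertices inside $G_i$ to distinguish $G_i$'s own $Solv$ edges, so those edges need not be charged to $J$ or to other copies at all. Second, even granting the converse, the accounting does not go through: the hypothesis minimizes the cardinality of the \emph{union} $\cup T_i\cup S\cup\bar{C_i}$, which can be strictly smaller than the sum $\sum|T_i|+|S|+\sum|\bar{C_i}|$ whenever the sets overlap (a cover vertex may lie in $V(J)$ or coincide with a traversal vertex), and a single basis vertex may simultaneously resolve a parallel edge of its own copy, an edge of $J$, and serve as an aligned cover vertex for a $Solv$ edge of another copy; neither of the two stated hypotheses visibly forbids such economies. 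As written, your lower bound is a restatement of the desired inequality together with an unproven converse of Lemma \ref{distinguish cover for projections}, not a proof, and given that the paper offers no proof either, one should treat the equality claim itself with some caution.
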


We could also extend an existing subgraph amalgamation to a new sub-graph amalgamation with a prescribed local metric dimension as stated in the following.
\begin{lemma} Let $\{(G_i|J_i)\}$ be isometric such that
 $E(G_i-J_i) = \parallel(J_i:G_i) \cup Solv(J_i:G_i)$.
Let $T_i$ be a traversal for $\{(G_i|J_i)\}$ and let $S$ be a minimal out-solving set for $J$. If there exist $C$ a projective co-traversal, then
for every positive integer $r$ there exists a graph $A$ containing $J$ as induced subgraph such that
$\dim_l(\amalg\{(G_i'|J_i')\}) = \sum |T_i| + |S| + r$, where $\{G_i'\}=\{G_i\}\cup\{A\}$.
\end{lemma}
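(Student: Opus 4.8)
The plan is to append a single graph $A$ whose only effect on the amalgamation is to add $r$ forced traversal vertices that simultaneously cover $C$ and contribute nothing to the out-solving of $J$. I would construct $A$ from $J$ by attaching $r$ vertex-disjoint \emph{twin gadgets}: fix an integer $t \ge \operatorname{diam}(J)$ and, for each $l = 1, \dots, r$, add two vertices $x_l, y_l$, join each of them to every vertex of $J$ by internally disjoint paths of length $t$, and insert the edge $x_l y_l$. Then each $x_l$ and each $y_l$ is at distance exactly $t$ from every vertex of $J$, so $x_l y_l \in \parallel(J_A:A)$, each path-edge lies in $Solv(J_A:A)$, and $E(A - J_A) = \parallel(J_A:A) \cup Solv(J_A:A)$. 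Since the endpoints of $x_l y_l$ are symmetric with respect to $J$, Lemma \ref{adonis} confines its distinguishers to $V(A - J_A)$, and no tip $x_{l'}$ with $l' \ne l$ distinguishes it (both are at distance $2t$ from $x_{l'}$); the $r$ parallel edges thus force $r$ independent vertices, and $T_A := \{x_l\}$ is a traversal with $|T_A| = r$. Because $d(x_l, c) = t = d(x_l, J)$ for every $c \in C$, the set $T_A$ also covers the projective co-traversal $C$.

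As $t \ge \operatorname{diam}(J)$, routing through any gadget costs at least $2t \ge \operatorname{diam}(J)$, so no pair of vertices of $J$ is shortcut; hence $J$ remains isometrically embedded and, together with the untouched originals, $\{(G_i'|J_i')\}$ is isometric by Lemma \ref{isometricity}. For the upper bound I would test the candidate set $B = \bigcup_i T_i \cup S \cup T_A$ against the four cases of Theorem \ref{complete covers}. Edges of $J$ are handled by $S$; parallel edges of each $G_i$ by $T_i$ and those of $A$ by $T_A$; a solvable edge of an original $G_i$ is distinguished, through Lemma \ref{distinguish cover for projections}, by the member of $T_A \subseteq V(A)$ covering the relevant vertex of $C$ (legitimately, since $A \ne G_i$). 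Finally $T_A$ is self-resolving: along each attaching path the distance from that gadget's own tip is strictly monotone, so $x_l$ distinguishes all edges of gadget $l$ except the few joining $J$ to the opposite twin $y_l$, and each of the latter is distinguished by any other tip $x_{l'}$. Thus $B$ is a local metric set and $\dim_l(H') \le \sum|T_i| + |S| + r$.

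For the reverse inequality I would invoke Theorem \ref{inferior} for the family $\{G_i'\}$, which gives $\dim_l(H') \ge \sum_i |T_i| + |T_A| + |S'| = \sum_i|T_i| + r + |S'|$, where $S'$ is a minimal out-solving set for $J$ in $H'$ and I have used that appending $A$ changes neither the parallel edges of the originals nor the minimal traversal $|T_A| = r$ of $A$. The traversal blocks lie in distinct $V(G_i - J_i)$ and in $V(A - J_A)$ by Lemma \ref{adonis}, so it only remains to show $|S'| = |S|$. The inequality $|S'| \le |S|$ is automatic; for the converse I would map each vertex $p$ of $A$ to its nearest vertex $c \in V(J)$ and check that, because $p$ reaches $V(J)$ either through $c$ (giving the profile of $c$ on $E(J)$) or through its tip (giving a profile constant on $V(J)$), any edge of $J$ that $p$ distinguishes is already distinguished by $c$. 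Replacing the vertices of any out-solving set of $H'$ by their anchors then yields an out-solving set of $H$ of no larger size, so $|S| \le |S'|$. Combining the two bounds gives $\dim_l(H') = \sum|T_i| + |S| + r$.

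The crux is the single calibration carried by $t$: the attaching paths must be long enough to forbid shortcuts inside $J$ (for isometry) and to keep every tip equidistant from $V(J)$ (so that $T_A$ covers all of $C$ and, decisively, out-solves no edge of $J$, which is what pins $|S'|$ to $|S|$), while the twin structure must still let $T_A$ resolve its own solvable boundary — which the monotonicity above secures precisely when $r \ge 2$. The one genuinely separate situation is the degenerate case $\bigcup T_i \cup S = \emptyset$ with $r = 1$, where the odd cycle created by a single twin edge would force $\dim_l(A) = 2$; there I would instead realise the value $1$ by taking $A$ bipartite, e.g.\ a single path attached to $J$, so that $H'$ is connected and bipartite and $\dim_l(H') = 1$ by Theorem \ref{okamoto bip}.
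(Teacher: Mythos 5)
Your construction is sound and, at the blueprint level, matches the paper's: both proofs append a single gadget graph $A$ obtained by attaching to $J$, via subdivided connections of length roughly the diameter of $J$, a structure of parallel edges whose distinguishers are forced (by Lemma \ref{adonis}) to live in $V(A-J_A)$, while the uniform attachment length makes every new attachment vertex equidistant from $V(J)$, so that the new vertices simultaneously act as a cover for the projective co-traversal $C$ and contribute nothing to out-solving $E(J)$. The genuine difference is in how the count $r$ is certified. The paper takes $A$ to be $J+K_r$ with each join edge subdivided into a path of length $D(J)$; then $\parallel(J_A:A)=E(K_r)$ is a clique of parallel edges, a minimum traversal has size $r-1$ (a vertex cover of $K_r$), and the last clique vertex is kept as a self-resolving cover, so the paper splits $r=(r-1)+1$ and must lean on the preceding equality lemma (stated without proof) to convert the cover into a forced extra basis vertex. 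You instead make $\parallel(J_A:A)$ a \emph{matching} of $r$ twin edges $x_ly_l$, each confined to its own gadget, so the minimum traversal has size exactly $r$ and the lower bound $\dim_l(H')\ge\sum|T_i|+|S|+r$ falls directly out of Theorem \ref{inferior} --- provided one checks, as you do and the paper does not, that appending $A$ changes neither the original traversals nor the minimal out-solving number (your anchoring argument for $|S'|=|S|$). Your version is therefore more self-contained on the lower bound; the paper's is shorter because $K_r$ packs the $r$ parallel edges into $r$ vertices rather than $2r$.

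Two small points deserve attention, one shared with the paper and one specific to you. First, both constructions implicitly assume $J$ is connected and that $d_J$ agrees with the common value $d_{G_i}(a^i,b^i)$: attachment paths of length $t\ge\operatorname{diam}(J)$ only prevent shortcuts \emph{relative to $d_J$}, so if $J$ is not isometrically embedded in the $G_i$ (the paper's notion of an isometric family does not force this), then $d_A(a,b)=d_J(a,b)>d_{G_i}(a,b)$ and the enlarged family is not isometric --- your distance computations in $H'$ in fact survive (tips stay at distance $t$, old distances are unchanged), but this should be said rather than inherited silently; in the degenerate case $E(J)=\emptyset$ this mismatch is unavoidable for your gadget and must be argued directly. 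Second, your case split is slightly incomplete: for $r=1$ with $\bigcup T_i\cup S\ne\emptyset$ the opposite-tip $x_{l'}$ is unavailable, so the edges joining $J$ to the $y_1$-side need another distinguisher; this is easily patched, since for any such edge $uv$ with $v\in V(J)$ one has $d(z,u)=d(z,v)+1$ for \emph{every} vertex $z$ outside $V(A)$ and every $z\in V(J)$, so any member of $\bigcup T_i\cup S$ does the job. Finally, in the fully degenerate case, ``a single path attached to $J$'' does not in general realize the required metric on $V(J)$ (a path metric is linear); a cleaner fallback is to take $A$ isomorphic to one of the (necessarily bipartite) $G_i$ and invoke Corollary \ref{bipartitas mas bipartita} to get $\dim_l(H')=1$.
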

\begin{proof}
Let $A'= J + K_r$. To ensure isometricity we substitute each edge $uv$, with $u \in V(K_r)$ and $v \in V(J)$ with a path $P_{uv}$ of length $D(J)$, where $D(J)$ denotes the diameter of $J$. The resulting graph is the $A$ we are looking for.$\parallel(J:A)=E(K_r)$. If we denote $V(K_r)=\{u_i\}$, then we can choose as a traversal $T_A=\{u_1, \ldots, u_{r-1}\}$, and so $\{u_r\}$ is a self-resolving cover and the result follows.
\end{proof}

In fact self-resolving covers are not infrequent as we could see in the next lemma.
\begin{lemma} \label{contruccion self resolving} Let $\{(G_i:J_i)\}$ be isometric. If for some $G_i$, the graph
$G_i-E(J_i)$ is bipartite, then there exists $A \subseteq V(G_i)$ such that $A$ is a self-resolving cover for $V(J)$.
\end{lemma}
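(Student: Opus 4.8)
The plan is to manufacture the cover from a proper $2$-colouring of the bipartite graph $B_i:=G_i-E(J_i)$, whose colour classes I call $(X,Y)$. The opening remark I would make is that deleting $E(J_i)$ removes only edges internal to $J_i$, so every edge of $E(G_i-J_i)$ survives in $B_i$ and hence joins $X$ to $Y$; thus the two endpoints of each edge we are asked to resolve lie in opposite colour classes. Because the amalgam is isometric, Lemma \ref{isometricity} lets me carry out every distance computation inside $G_i$ rather than in $H$, and, since a geodesic from a vertex to the set $V(J)$ touches $V(J)$ only at its endpoint and therefore never traverses an edge of $J_i$, one also has $d(w,J)=d_{B_i}(w,J)$ for all $w$.

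The set $A$ will be assembled in two layers. For the covering clause---here the relevant co-traversal is $C=V(J)$, which is legitimate since $V(J)$ is always a co-traversal---I would attach to every $c\in V(J)$ a witness realising $d(a,c)=d(a,J)$, so that $A$ automatically meets the $V(J)$-cover condition. The substance is the self-resolving clause, namely that $A$ must distinguish $E(G_i-J_i)-\parallel(J_i:G_i)$, and I would attack all of these edges uniformly by parity. Writing $\pi$ for the colouring, a geodesic in $G_i$ from a witness $a$ to a vertex $w$ has length congruent modulo $2$ to $\pi(a)+\pi(w)$ plus the number of same-colour edges of $J_i$ it traverses; since $\pi(u)\neq\pi(v)$ for every target edge $uv$, the witness $a$ separates $uv$ precisely when its geodesics to $u$ and to $v$ spend the same parity of such $J_i$-edges, and in particular whenever both geodesics stay inside $B_i$.

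The construction is therefore to add to $A$, component by component of $B_i$, witnesses chosen so that for each non-parallel edge of $E(G_i-J_i)$ at least one witness reaches both endpoints along a geodesic lying in $B_i$; the parity count above then certifies that this witness resolves the edge. When $B_i$ is connected this is morally the single distinguishing vertex guaranteed by Theorem \ref{okamoto bip}, now read off in the metric of $G_i$. I expect the hypothesis ``non-parallel'' to be exactly what supplies such a shortcut-free witness: a parallel edge is by definition invisible to every vertex of $J$, and it is precisely these edges---the ones relegated to the traversals $T_i$---that force geodesics through the odd cycles of $J_i$ and defeat the parity test.

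The main obstacle is this very reconciliation between the clean parity of $B_i$ and the genuine distances of $G_i$. Deleting $E(J_i)$ can merge the colours of two vertices of $J$, so $G_i$ typically carries odd cycles through $J_i$ and the $2$-colouring of $B_i$ does not extend to a colouring of $G_i$; a geodesic that shortcuts through $J$ may flip parity and break the argument. The crux is thus to prove that for every edge of $E(G_i-J_i)-\parallel(J_i:G_i)$ one can locate a witness whose geodesics to both endpoints avoid these parity-flipping shortcuts---a localisation I would try to control by treating each bipartite component separately and by using $d(\cdot,J)=d_{B_i}(\cdot,J)$ to pin the relevant distances inside $B_i$. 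Once this localisation is in place, $A$ satisfies both the covering and the self-resolving conditions, and the lemma follows.
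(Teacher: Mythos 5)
Your setup is sound as far as it goes: the observation that every edge of $E(G_i-J_i)$ survives in $B_i=G_i-E(J_i)$ and joins the two colour classes, the fact that $d(\cdot,J)=d_{B_i}(\cdot,J)$ because a geodesic to the set $V(J)$ meets $J$ only at its endpoint and hence uses no edge of $E(J_i)$, and the parity criterion (a witness resolves $uv$ as soon as its geodesics to both endpoints live in $B_i$) all match the mechanism underlying the paper's proof. But your argument stops exactly where the lemma begins. You say the witnesses are ``chosen so that for each non-parallel edge of $E(G_i-J_i)$ at least one witness reaches both endpoints along a geodesic lying in $B_i$,'' and then concede that ``the crux is thus to prove'' that such witnesses exist --- a localisation you ``would try to control.'' No construction is ever given, so the central existence claim is unproven; this is a genuine gap, not a detail. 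The paper supplies precisely this missing construction: in each component $A_i$ of $G_i-E(J_i)$ it takes $M_i=\{a\in A_i:\ b\in N(a)\Rightarrow d(b,J)\le d(a,J)\}$, the local maxima of $d(\cdot,J)$. Ascending chains (the sets $D_{x,k}$) from any $x\in V(J)$ terminate in $M_i$ at a vertex $a$ with $d(a,x)=d(a,J)$, which gives the cover property; and ascending from the deeper endpoint $v$ of an edge $uv$ yields $a_i\in M_i$ with $d(a_i,v)=d(a_i,J)-d(v,J)$, realised by a strictly $d(\cdot,J)$-monotone path that automatically avoids $E(J_i)$, since every edge of $J_i$ joins two vertices with $d(\cdot,J)=0$.

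Two further points, one structural and one where your intuition points the wrong way. First, the paper's argument never has to exhibit a shortcut-free geodesic to $u$, which is the harder half of the localisation you set yourself: if $d(a_i,u)$ equalled $d(a_i,v)=d(a_i,J)-d(v,J)$, then (as $d(u,J)\le d(v,J)$) an $a_i$--$u$ geodesic would have to decrease $d(\cdot,J)$ by exactly one at every step, hence lie in $B_i$, and together with the ascending $v$--$a_i$ path and the edge $uv$ it would close an odd walk in the bipartite graph $B_i$; the contradiction supplies the $u$-side control for free. Second, your expectation that the hypothesis ``non-parallel'' is what supplies the shortcut-free witness is mistaken: the construction $M_i$ distinguishes \emph{every} edge of $E(G_i-J_i)$, parallel edges included --- parallelness constrains distances from $V(J)$ only and says nothing about geodesics to witnesses deep inside $G_i-J_i$. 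The restriction to $E(G_i-J_i)-\parallel(J_i:G_i)$ in the definition of self-resolving is a weakening of what must be proved, not a tool for proving it, so a strategy that leans on non-parallelness to produce the witness would not close your gap.
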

\begin{proof}
Let ${\cal A}$ be the set of connected components of $G_i-E(J_i)$. For each $A_i \in {\cal A}$ let $M_i:=\{a_i \in A_i: b_i\in N(a_i) \Rightarrow d(b_i,J) \le d(a_i,J)\}$. For every $x\in V(J) \cap A_i$, we construct recursive sets $D_{x,k}$ with $D_{x,0}=\{x\}$ and $D_{x,k+1} = \{y \in A_i: d(y,J)=k+1 \wedge N(y)\cap D_{x,k}\ne \emptyset\}$. Eventually, for sufficiently large $k_0$, $D_{x,k_0} \cap M_i \ne \emptyset$. For $a_i\in D_{x,k_0} \cap M_i$ we have $d(a_i,J)=d(a_i,x)$ and so $M_i$ is a cover for $J$. Let $uv\in E(G_i-J_i)$, with $d(u,J) \le d(v,J)$. Then there exists an integer $r_0$ such that $M_i\cap D_{v,r_0} \ne \emptyset$. Since  $G_i-E(J_i)$ is bipartite, then for $a_i \in M_i\cap D_{v,r_0}$, $d(a_i,v)=r_0 - d(v,J) \ne r_0 - d(v,J) + 1 = d(a_i,u)$. This results in $M_i$ being self-resolving.
\end{proof}

We could utilize the former lemma to obtain a new upper bound. For an isometric $\{(G_i|J_i)\}$, define the class ${\cal B} = \{G_i: G_i-E(J_i)\; \hbox{is a bipartite graph}\}$. For $G_i\in {\cal B}$, let $M_i:=\{a_i\in V(G_i): b_i\in N(a_i)\Rightarrow d(b_i,J)\le d(a_i,J)\}$.

\begin{lemma} Let $\{(G_i|J_i)\}$ be isometric. If $E(G_i-J_i) =\parallel(J_i:G_i)\cup Solv(J_i:G_i)$ and
${\cal B} \ne \emptyset$ then
$$\dim_l(H) \leq \sum |T_i| + |S| + \sum |M_i|.$$
\end{lemma}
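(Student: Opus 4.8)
The plan is to obtain this bound as an essentially immediate application of Theorem~\ref{complete covers}, feeding it the self-resolving covers manufactured by Lemma~\ref{contruccion self resolving}. The ingredients I would supply to Theorem~\ref{complete covers} are the given traversals $T_i$, the minimal out-solving set $S$, the co-traversal $C = V(J)$, and a family of covers ${\cal C} = \{\bar{C_i}\}$ defined by $\bar{C_i} = M_i$ when $G_i \in {\cal B}$ and $\bar{C_i} = \emptyset$ when $G_i \notin {\cal B}$. Observe first that $V(J)$ is a co-traversal and is automatically \emph{projective}: for any vertex $u$ there is, by the very definition of $d(u,J)$, some $c \in V(J)$ with $d(u,c) = d(u,J)$. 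Note also that the cardinality of $C$ never enters the bound of Theorem~\ref{complete covers}, so using the whole of $V(J)$ here costs nothing.

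Next I would verify the hypotheses of Theorem~\ref{complete covers}. Under the standing assumption $E(G_i-J_i) = \parallel(J_i:G_i) \cup Solv(J_i:G_i)$, the edge set $E(G_i-J_i) - (\parallel(J_i:G_i) \cup Solv(J_i:G_i))$ that a \emph{complete} cover must resolve is empty; hence every $M_i$ is vacuously complete, and each $\emptyset$ is a complete $C_i$-cover by the stated convention, so ${\cal C}$ is indeed a family of complete $C_i$-covers. Because ${\cal B} \ne \emptyset$, I fix some $G_{i_0} \in {\cal B}$; by Lemma~\ref{contruccion self resolving} the cover $\bar{C}_{i_0} = M_{i_0}$ is self-resolving and is a cover for all of $V(J)$, so the first alternative in the hypothesis of Theorem~\ref{complete covers} holds. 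The theorem then gives $\dim_l(H) \le \sum |T_i| + |S| + \sum |\bar{C_i}| = \sum |T_i| + |S| + \sum_{G_i \in {\cal B}} |M_i|$, which is exactly the claimed inequality.

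The only delicate point, and the step I would write out most carefully, is how the edges of $Solv(J_i:G_i)$ get resolved, since the empty covers attached to the graphs outside ${\cal B}$ do nothing on their own. For such an edge $uv$ with $d(u,J) < d(v,J)$: if $G_i \in {\cal B}$, then $M_i$ is self-resolving and, as $E(G_i-J_i) - \parallel(J_i:G_i) = Solv(J_i:G_i)$ here, it distinguishes $uv$ directly; if $G_i \notin {\cal B}$, I would invoke projectivity of $C = V(J)$ to choose $c \in V(J)$ with $d(u,c) = d(u,J)$ and then use that $M_{i_0}$ covers $V(J)$ to find $a \in M_{i_0}$ with $d(a,c) = d(a,J)$, so that the ``moreover'' clause of Lemma~\ref{distinguish cover for projections} (applicable since $i_0 \ne i$) makes $a$ distinguish $uv$. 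The main obstacle is precisely this remote-resolution mechanism: one must check that a single self-resolving cover from ${\cal B}$ can discharge the solvable edges of all the graphs lying outside ${\cal B}$, routed through a vertex $c$ of $J$. Once this interplay between projectivity and Lemma~\ref{distinguish cover for projections} is confirmed, no further computation is needed and the bound follows.
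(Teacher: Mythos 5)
Your proof is correct and takes exactly the route the paper intends: the lemma is stated there without an explicit proof, immediately after the remark that Lemma~\ref{contruccion self resolving} can be utilized, and your derivation---applying Theorem~\ref{complete covers} with the trivially projective co-traversal $C=V(J)$, with $\bar{C_i}=M_i$ for $G_i\in{\cal B}$ and $\bar{C_i}=\emptyset$ otherwise (all complete by the stated convention, since $E(G_i-J_i)=\parallel(J_i:G_i)\cup Solv(J_i:G_i)$), and with the self-resolving cover $M_{i_0}$ supplied by ${\cal B}\ne\emptyset$---is precisely the intended argument. Your explicit check that a single self-resolving cover from ${\cal B}$ discharges the solvable edges of the graphs outside ${\cal B}$, routed through a vertex $c\in V(J)$ via the ``moreover'' clause of Lemma~\ref{distinguish cover for projections} (valid since $i_0\ne i$), correctly fills in the one step the paper leaves implicit.
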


The results we presented in this paper still lead to several open questions. Two of those which are of importance are presented here.
\begin{openquestion} What are the conditions such that our upper bounds are the exact local metric dimension? Or, does any basis of a subgraph-amalgamation contain a cover?
\end{openquestion}

\begin{openquestion} Is isometricity the least we could ask to obtain more information regarding the local metric dimension? What if  we only consider the connectivity of $J$?
\end{openquestion}

\section{Acknowledgments}

This research was supported by Riset Unggulan Perguruan Tinggi 2015, funded by Indonesian Ministry of Research, Technology, and Higher Education.

\end{document}